\crefname{hypothesis}{Hypothesis}{Hypotheses}
\title{
	New Jacobi--Davidson type
	methods for the large SVD computations\thanks{Submitted to the editors DATE.
	\funding{The work of the first author was supported by the Youth
		Fund of the National Science Foundation of China under grant
		12301485, the Youth Program of the Natural Science Foundation
		of Jiangsu Province under grant BK20220482 and the Jiangsu Province
		Youth Science and Technology Talent Support Program under grant JSTJ-2025-828,
		and the work of the second author was supported by the National Science	Foundation
		of China under grant 12571404}.}}
\author{Jinzhi Huang\thanks{School of Mathematical Sciences,
		Soochow University, 215006 Suzhou, China
		(\url{jzhuang21@suda.edu.cn}).}
\and
	Zhongxiao Jia\thanks{Corresponding author. Department of
		Mathematical Sciences, Tsinghua University, 100084 Beijing,
		China (\url{jiazx@tsinghua.edu.cn}).}}
\DeclareMathOperator{\new}{new}
\DeclareMathOperator{\out}{out}
\DeclareMathOperator{\diag}{diag}
\DeclareMathOperator{\spans}{span}
\newcommand{\UU}{\mathcal{U}}
\newcommand{\VV}{\mathcal{V}}
\newcommand{\RR}{\mathcal{R}}
\newcommand{\bsmallmatrix}[1]{\begin{bmatrix}\begin{smallmatrix}
			#1\end{smallmatrix}\end{bmatrix}}
\begin{document}
	\maketitle
	
	\begin{abstract}
		In a Jacobi--Davidson (JD) type method for singular value decomposition
		(SVD) problems, called JDSVD, a large symmetric and generally indefinite
		correction equation is solved iteratively at each outer iteration, which
		constitutes the inner iterations and dominates the overall efficiency of JDSVD.
		In this paper, by fully exploiting useful information
from current subspaces,
a new effective correction equation is derived at each outer iteration, leading to
a new variant of JDSVD, called JDSVD-V.
It is proved that JDSVD-V retains the same convergence of the outer iterations as JDSVD.
A substantial advantage of JDSVD-V over JDSVD is that the new
correction equations in JDSVD-V are much easier to iteratively
 solve than the standard ones in JDSVD: the MINRES method for the new correction equations
		converges much faster when there
		is a cluster of singular values closest to a given target, a typical case
in applications.
		A new thick-restart JDSVD-V algorithm with deflation and purgation is
		proposed that simultaneously accelerates the outer and inner convergence
		of the standard thick-restart JDSVD and computes several singular triplets.
		Numerical experiments justify the theory and illustrate the considerable
		superiority of JDSVD-V to JDSVD, and demonstrate that a similar two-stage
		JDSVD-V algorithm substantially outperforms the most advanced PRIMME\_SVDS
		software nowadays for computing the smallest singular triplets.
	\end{abstract}
	
	\begin{keywords}
		singular value decomposition, singular value, singular vector,
		JDSVD, JDSVD-V, outer iteration, inner iteration,
		correction equation, MINRES, convergence
	\end{keywords}
	
	\begin{AMS}
		65F15, 15A18, 65F10
	\end{AMS}
	
	\section{Introduction}\label{sec:1}
	Let $A\in\mathbb{R}^{M\times N}$ be large and possibly sparse
	with $M\geq N$, and its singular value decomposition (SVD) be
	\begin{equation}\label{svdA}
		A = U\Sigma V^T,
	\end{equation}	
	where the superscript $T$ denotes the transpose of a matrix,
	$U=[u_1,\dots,u_M]\in\mathbb{R}^{M\times M}$ and
	$V=[v_1,\dots,v_N]\in\mathbb{R}^{N\times N}$ are orthogonal,
	and $\Sigma=\diag\{\sigma_1,\dots,\sigma_N\}\in\mathbb{R}^{M\times N}$
	with $\sigma_i\geq0,i=1,\dots,N$.
	The $(\sigma_i,u_i,v_i)$,  $i=1,\dots,N$ are the singular triplets
	of $A$ with the singular values $\sigma_i$ and
the associated left and right singular vectors $u_i,v_i$.
	
	Given a target $\tau\geq 0$ that is not equal to any singular value
	of $A$, assume that the singular values of $A$ are labeled by
	\begin{equation}\label{svalues}
		|\sigma_1-\tau|<\dots<|\sigma_{\ell}-\tau|<
		|\sigma_{\ell+1}-\tau|\leq\dots\leq|\sigma_N-\tau|.
	\end{equation}
	We are interested in the $\ell$ singular triplets $(\sigma_i,u_i,v_i)$,
	$i=1\dots,\ell$.
	Specifically, if $\tau$ is inside $(\sigma_{\min},\sigma_{\max})$
	but neither close to $\sigma_{\min}$ nor
	to $\sigma_{\max}$ with $\sigma_{\min}$ and $\sigma_{\max}$ being the
smallest and largest singular values of $A$, then $(\sigma_i,u_i,v_i)$, $i=1\dots,\ell$
	are called interior singular triplets of $A$; otherwise they
	are called extreme, i.e., the largest or smallest, ones.
	Particularly, $\tau=0$ corresponds to the smallest singular triplets and
	$\tau$ closest to $\sigma_{\max}$ corresponds to the largest ones.
	
	For the concerning large SVD problem, only
	projection methods can be used. Over the years,
	projection methods \cite{golub2012matrix,parlett1998symmetric,
		saad2003,stewart2001matrix,vandervorst} for large eigenvalue problems
	have been adapted to the computation of a partial SVD of $A$. Given left and
	right searching subspaces, there are four classes of projection methods
	for computing approximate singular triplets of $A$, which are based on
	the standard extraction \cite{berry1992large,hernandes2008robust,hochstenbach2001jacobi,
		hochstenbach2004harmonic,jia2003implicitly,wu2015preconditioned},
	the harmonic extraction \cite{hochstenbach2001jacobi,
		hochstenbach2004harmonic,hochstenbach2008harmonic,
		jia2005convergence,kokiopoulou2004computing,niu2012harmonic},
	the refined extraction \cite{hochstenbach2004harmonic,
		hochstenbach2008harmonic,huang2019inner,jia2003implicitly,
		wu16primme,wu2015preconditioned}, and the refined harmonic extraction
	\cite{huang2019inner,jia2010refined,wu16primme,wu2015preconditioned}, respectively.
	For these methods, a common core task is the construction of left
	and right searching subspaces so that they contain
	as much information on the desired singular vectors as possible.
	Besides a very recent approach that is used in the FEAST SVDsolvers
	\cite{jiazhang2023a,jiazhang2023b}
	for computing the singular triplets with all the singular values in a given interval,
	there are mainly two types of approaches for this task.
	The first approach is the Lanczos bidiagonalization (LBD) process
	\cite{berry1992large,golub2012matrix}. It reduces $A$ to a sequence
	of small bidiagonal matrices and generates two sets of orthonormal
	bases of the left and right Krylov subspaces formed by $AA^T$ and
	$A^TA$ with relevant starting vectors. Such searching
	subspaces generally favor the left and right singular
	vectors corresponding to the extreme singular values, and the distances between these singular vectors and the
	corresponding Krylov subspaces generally tend to zero first as the LBD process proceeds.
	Combined with appropriate extraction approaches,
	state-of-the-art LBD type algorithms \cite{jia2003implicitly,jia2010refined,kokiopoulou2004computing,
		larsen2001combining,sorensen1992implicit} can compute
	the extreme singular triplets of $A$ efficiently if the desired singular values
	are not many and are well separated.
	Unfortunately, the smallest singular values from applications are often clustered,
	so that the LBD type methods may work poorly.
	
The Jacobi--Davidson (JD)-type methods, proposed by Hochstenbach \cite{hochstenbach2001jacobi}
for large SVD computations and referred to as the JDSVD method, give
	the second approach to constructing subspaces.
Wu and Stathopoulos \cite{wu2015preconditioned} present a
hybrid two-stage method, called PHSVD.
At the first stage, PHSVD transforms the SVD problem of $A$
into the mathematically equivalent eigenproblem of $A^TA$ and computes the desired singular
values and right singular vectors to best achievable accuracy
by a chosen projection method. Then if further accuracy is required,
it switches to the second stage that solves the eigenproblem of
$\bsmallmatrix{\bm{0}&A\\A^T&\bm{0}}$ by a chosen projection method
using the approximate singular vectors obtained in the first stage
as initial guesses.
Wu, Romero and Stathopoulos \cite{wu16primme} develop the PRIMME$\_$SVDS
software programmed in C language;
it implements the PHSVD method with the refined extraction in the second stage and
uses JDQMR by default,
while the method in the first stage is taken as the default
`DYNAMIC' that dynamically switches between JDQMR and the GD+k
method to minimize the time, where QMR in JDQMR means that a symmetric
QMR (sQMR) method is used to solve the correction equations \cite{stathsisc2007}.
A great merit of the hybrid PRIMME$\_$SVDS is that, regarding
the outer iterations, JDQMR or GD+k in the first stage is generally much faster than it is
in the second stage,
which is true when the smallest singular values are desired.
%An advantage of sQMR over the MINRES method \cite{greenbaum1997iterative,saad2003} is
%that sQMR allows a symmetric indefinite preconditioner and uses the same matrix-vector products
%at each iteration as MINRES;
%without preconditioner, sQMR is mathematically equivalent to MINRES \cite{stathsisc2007}.
Goldenberg, Stathopoulos and Romero  \cite{goldenberg2019} present a Golub--Kahan Davidson method,
%and have considered many details on its efficient and stable implementations.
which is a specific JDSVD method and is competitive with PRIMME$\_$SVDS
when computing the smallest singular triplets.

At each step of JDSVD \cite{hochstenbach2001jacobi}
and the second stage of PRIMME$\_$SVDS, one must
solve a certain correction equation that involves
$\bsmallmatrix{-\tau I&A\\A^T&-\tau I}$ and the current approximate singular triplet,
and uses its (approximate) solution to expand the searching subspaces. Whenever the current approximate
singular value starts to converge with fair accuracy,
the two methods will replace $\tau$ in the correction equations so
that their solutions provide richer information on the desired left and right singular vectors.
Such kind of correction equations is approximately solved iteratively,
called the inner iterations, while the outer iterations
compute approximate singular triplets with respect to the
searching subspaces. The least solution accuracy requirement on the
correction equations and convergence speed of the inner iterations
critically affect the outer convergence and dominate the
overall efficiency of JDSVD and PRIMME$\_$SVDS.
The least solution accuracy means that the outer iterations of
the resulting (inexact) JDSVD and PRIMME$\_$SVDS methods behave
as if all the correction equations were solved exactly.
By extending the results in \cite{jia2014inner} on a JD-type method
for the eigenvalue problem to the JDSVD method,
the authors in \cite{huang2019inner} have figured out such accuracy requirement
and proved that it suffices and is safe to solve all the
correction equations with the relative errors $\varepsilon\in [10^{-4}, 10^{-3}]$
of approximate solutions. Precisely, suppose that the JDSVD method
converges very fast and only around ten outer iterations are needed
when the corrections equations are solved exactly, then
it is proved in \cite{huang2019inner,jia2014inner} that
the approximate solutions of the correction equations
with the relative errors around $10^{-4}$ suffice to ensure that the resulting JDSVD method uses
almost the same outer
iterations to converge; if the JDSVD method
converges slowly and uses a lot of outer iterations, then the approximate solutions of
the correction equations with the relative errors
$\varepsilon\in [10^{-3},10^{-2}]$ can make the resulting
inexact JDSVD method well mimic the outer iterations of the exact JDSVD method.

For symmetric positive (or negative) definite and indefinite linear systems,
the CG and MINRES methods are preferences among Krylov iterative solvers, respectively.
Their convergence rates are uniquely
determined by the eigenvalues of an underlying coefficient matrix  \cite{greenbaum1997iterative}.
More precisely, the MINRES method may
converge very slowly when the coefficient matrix has relatively
small eigenvalues in magnitude; if those small
eigenvalues disappear, MINRES will converge (much) faster.
%For such a linear system, we seek to preserve
%the symmetry of the preconditioned linear system and,
%meanwhile, shrink the eigenvalue inclusion interval(s) of the
%preconditioned coefficient matrix. Starting with this perspective,
%for the correction equation in JDSVD, we will see that an ideal
%preconditioning procedure is to annihilate the effects of
%$m$ clustered smallest eigenvalues
%in magnitude of the coefficient matrix that are well separated from the other
%eigenvalues,
%so that the eigenvalue distribution of the new or preconditioned
%coefficient matrix is favorable and MINRES converges fast.
As it will turn out, for $\sigma_1$ close to $\tau$,
if the orthogonal projector in the standard correction
equation of JDSVD is replaced by a new one that is properly formed by approximations to the left and right
singular vectors $u_i$ and $v_i$ associated with the $m(>1)$ clustered singular values
$\sigma_i,i=1,2,\ldots,m$ rather than only $\sigma_1$,
then the corresponding coefficient matrix in the new correction equation
has no small eigenvalues in magnitude, thereby making MINRES converge much faster than it does
for the standard correction equation.
%For $m=1$, i.e.,  the desired $\sigma_1$ is well separated from the others,
%the new correction equation
%degenerates to the standard one.

Strikingly, extensive numerical experiments have demonstrated that, as
the outer iterations of JDSVD proceed, the
subspaces simultaneously provide increasingly better approximations to
the left and right singular vectors $u_i$ and $v_i$, $i=1,\ldots,m$ corresponding to the $m$
clustered singular values $\sigma_i$ of $A$. We will fully exploit such basic fact to derive
new  correction equations and obtain a new variant of JDSVD,
called JDSVD-V. We particularly mention that any
preconditioner for the standard correction equations in
JDSVD are directly applicable to the new ones in JDSVD-V.

In this paper, we first focus on the computation of one singular
triplet $(\sigma_1,u_1,v_1)$, supposing that there are $m$ clustered singular
values of $A$ closest to the target $\tau$.
For the new correction equation, we prove an extremely important property:
its solution has the same quality as that of the standard correction equation
when expanding the current subspaces, meaning that the solution of the new
correction equation expands the subspaces as effectively as that of the
standard one and, therefore, does not slow down the convergence of the subsequent outer iterations.
Then we establish convergence results on MINRES for the new and standard correction
equations, proving that MINRES converges substantially faster for the former
than it does for the latter once $\sigma_1$ is clustered with $\sigma_2,\ldots,\sigma_m$
for $m>1$, a typical case in applications, especially
when the smallest singular triplets are desired. As a byproduct,
we show why the correction equation proposed in \cite{genseberger1999alternative} for the eigenvalue
problem and adapted to the current context
is less effective than those in JDSVD and JDSVD-V and the corresponding
JDSVD variant uses more outer iterations to converge.
The reasoning applies to the corresponding JD method for the eigenvalue problem.

In computations, we shall set up quantitative criteria and
propose an adaptive approach to dynamically form the new correction equation at each outer iteration.
For practical purpose, we propose a new thick-restart JDSVD-V algorithm
with certain deflation and purgation techniques to compute the $\ell(>1)$ singular triplets of $A$.
%Whenever $m>k_{\min}$, the initial subspace dimension of the standard
%thick-restart, the new thick-restart
%uses fewer outer iterations to achieve the convergence.

Since the goal of this paper is to set up new correction equations, we shall take
the standard extraction-based JDSVD
method as an instance and the shifts in all the correction equations
as the fixed $\tau$. However, the theoretical results and analysis on
the new and standard correction equations are adaptable
to the harmonic, refined and refined harmonic
extractions-based JDSVD methods and their adaptively-changing-shift
variants \cite{huang2019inner}, so are the corresponding
thick-restart algorithms. They are also adaptable to
the eigenproblem-based JDSVD methods used in
PRIMME$\_$SVDS \cite{wu16primme}.

In  \cref{sec:2} we review the standard JDSVD method.
In \cref{sec:4} we derive the new correction
equation in detail, make a convergence analysis on
MINRES for it, and consider
how to dynamically set up new correction equations in
computations. In  \cref{sec:5} we propose the thick-restart
JDSVD-V with deflation and purgation to compute the $\ell$ singular triplets of $A$.
Numerical experiments in  \cref{sec:6} illustrate the great superiority
of the new thick-restart JDSVD-V to the standard thick-restart JDSVD.
To compare our algorithm with the nowadays
most advanced PRIMME$\_$SVDS software \cite{wu16primme}, in the same spirit we have developed
a hybrid two-stage thick-restart JDSVD-V algorithm,
called JDSVD-V\_HYBRID: in the first stage, a similar JDSVD-V method
computes the desired largest or smallest singular triplets based on
the eigenproblem of $A^TA$ (or $AA^T$ if $m<n$) with the same outer
stopping tolerance as that in the first stage of PRIMME\_SVDS,
and it then switches to the second stage  whenever necessary, where JDSVD-V
is based on the eigenproblem of $\bsmallmatrix{\mathbf{0}&A\\A^T&\mathbf{0}}$
using the approximate singular triplets computed at the first stage as
initial guesses. We demonstrate
that JDSVD-V\_HYBRID substantially outperforms PRIMME\_SVDS
when computing the smallest singular triplets and is at least competitive with
the latter when computing the largest ones.
Section~\ref{sec:7} concludes the paper.

Throughout this paper, denote by $\|X\|$ the 2-norm of a vector or matrix $X$,
by $\kappa(X)=\sigma_{\max}(X)/\sigma_{\min}(X)$ the condition number of $X$ with $\sigma_{\max}(X)$ and
$\sigma_{\min}(X)$ the largest and smallest singular values of $X$, respectively.
%, and by $I_{i}$ and
%$\bm{0}_{i,j}$ the identity matrix of order $i$ and zero matrix of size $i\times j$, respectively.
%The subscripts of identity and zero matrices are omitted whenever their sizes are clear from the context.

\section{The basic JDSVD method}\label{sec:2}
We review the basic standard extraction-based JDSVD method \cite{hochstenbach2001jacobi} for
computing the singular triplet $(\sigma_1,u_1,v_1)$ of $A$.

At the $k$th outer iteration, given two
$k$-dimensional left and right searching subspaces $\UU\subset\mathbb{R}^{M}$
and $\VV\subset\mathbb{R}^{N}$, the standard extraction approach seeks
for scalars $\theta_i\geq 0$ and unit-length vectors $\tilde u_i\in\UU$
and $\tilde v_i\in\VV$ that satisfy the double-orthogonality
\begin{equation}\label{standard}
	\qquad A\tilde v_i-\theta_i \tilde u_i\perp \UU
	\quad\mbox{and}\quad
	A^T\tilde u_i-\theta_i \tilde v_i\perp \VV, \qquad i=1,\dots,k.
\end{equation}
The $(\theta_i,\tilde u_i,\tilde v_i)$ are the Ritz approximations
of $A$ from $\UU$ and $\VV$ with $\theta_i$ the Ritz values
and $\tilde u_i$ and $\tilde v_i$ the left and right Ritz vectors, respectively.
Condition \eqref{standard} is denoted as
$
(A\tilde v_i-\theta_i \tilde u_i,A^T\tilde u_i-\theta_i \tilde v_i)
\perp\perp (\UU,\VV),\ i=1,\dots,k.
$
Label $|\theta_1-\tau|\leq |\theta_2-\tau|\leq\dots\leq|\theta_k-\tau|$.
The method takes $(\theta_1,\tilde u_1,\tilde v_1)$ as an approximation
to $(\sigma_1,u_1,v_1)$.

Let $\widetilde U\in\mathbb{R}^{M\times k}$ and $\widetilde
V\in\mathbb{R}^{N\times k}$ be orthonormal basis
matrices of $\UU$ and $\VV$. Then the left and right Ritz vectors
$\tilde u_i=\widetilde Uc_i$ and $\tilde v_i=\widetilde Vd_i$ with
unit-length $c_i\in\mathbb{R}^{k}$ and $d_i\in\mathbb{R}^{k}$, $i=1,\dots,k$.
Define $H=\widetilde U^TA\widetilde V$. Then
\eqref{standard} is equivalent to
\begin{equation*}
	\qquad Hd_i=\theta_i c_i
	\qquad\mbox{and}\qquad
	H^Tc_i=\theta_i d_i,
	\qquad i=1,\dots,k.
\end{equation*}
This means that the $(\theta_i,c_i,d_i),i=1,\dots,k$  are the singular triplets of $H$
and
\begin{equation}\label{appsingular}
\qquad	(\theta_i,\tilde u_i,\tilde v_i)
=(\theta_i,\widetilde Uc_i,\widetilde Vd_i), \qquad i=1,\dots,k,
\end{equation}
whose residuals are defined by
\begin{equation}\label{residual}
	r_i = r(\theta_i,\tilde u_i,\tilde v_i) =
	\begin{bmatrix}A\tilde v_i-\theta_i \tilde u_i\\ A^T\tilde u_i-\theta_i \tilde v_i\end{bmatrix}.
\end{equation}
By \eqref{standard}, we particularly have
\begin{equation}\label{resperp}
	\qquad r_i\perp\perp (\tilde u_j,\tilde v_j), \qquad i,j=1,\dots,k.
\end{equation}
Obviously, $r_i=\bm{0}$ if and only if $(\theta_i,\tilde u_i,\tilde v_i)$
is an exact singular triplet of $A$.
For a user prescribed tolerance $\varepsilon_{\mathrm{out}}>0$, we claim that
$(\theta_1,\tilde u_1,\tilde v_1)$
has converged, and terminate the basic JDSVD method if its residual $r_1$ satisfy
\begin{equation}\label{convergence}
	\|r_1\|\leq \|A\| \cdot \varepsilon_{\mathrm{out}}.
\end{equation}

If $(\theta_1,\tilde u_1,\tilde v_1)$ does not yet converge, JDSVD expands
$\UU$ and $\VV$ in the following way: It
first solves the large symmetric and generally indefinite
correction equation
\begin{equation}\label{correction}
	\begin{bmatrix}	I-\tilde u_1\tilde u_1^T&\\&I-\tilde v_1\tilde v_1^T \end{bmatrix}
	\begin{bmatrix} -\tau I&A\\A^T&-\tau I \end{bmatrix}
	\begin{bmatrix}	I-\tilde u_1\tilde u_1^T&\\&I-\tilde v_1\tilde v_1^T \end{bmatrix}
	\begin{bmatrix} s\\ t\end{bmatrix}
	=-r_1
\end{equation}
for $(s,t)\perp\perp(\tilde u_1,\tilde v_1)$ approximately using, e.g.,
the preferable MINRES method, and then orthonormalizes the approximate
solutions, denoted by
$\tilde s$ and $\tilde t$, against $\widetilde U$ and $\widetilde V$
to obtain $u_+$ and $v_+$.
The orthonormal columns of
\begin{equation*}
	\widetilde U_{\new} = [\widetilde U, u_{+}]
	\qquad\mbox{and}\qquad
	\widetilde V_{\new} = [\widetilde V, v_{+}]
\end{equation*}
construct the expanded $(k+1)$-dimensional left and right subspaces
\begin{equation*}
	\UU_{\new}=\spans\{\widetilde U_{\new}\}
	=\spans\{\widetilde U,\tilde s\}
	\quad\mbox{and}\quad
	\VV_{\new}=\spans\{\widetilde V_{\new}\}
	=\spans\{\widetilde V,\tilde t\}.
\end{equation*}
JDSVD then computes a new approximation to
$(\sigma_1,u_1,v_1)$ from $\UU_{\new}$ and $\VV_{\new}$.

The shift-and-invert residual Arnoldi (SIRA) method \cite{jia2014inner} applied to
the current context solves the inner linear system
\begin{equation}\label{correction5}
	\begin{bmatrix} -\tau I&A\\A^T&-\tau I \end{bmatrix}
	\begin{bmatrix} s\\ t\end{bmatrix}
	=-r_1,
\end{equation}
and $s$ and $t$ are orthonormalized against to $\widetilde U$ and $\widetilde V$
to obtain the expansion vectors $u_+$
and $v_+$. It is proved in \cite[Theorem 2.1]{jia2014inner}
that such $u_+$ and $v_+$ are identical to the ones in JDSVD when $\tau\not=\theta_1$
and SIRA is thus mathematically equivalent to JDSVD. Notice that the solution
of \eqref{correction5} performs one-step inverse iteration on
$\begin{bmatrix} -\tau I&A\\A^T&-\tau I \end{bmatrix}$.
Computationally, however, it is
faster to iteratively solve \eqref{correction} than \eqref{correction5} once $\theta_1$ starts to converge,
indicating that JDSVD is preferable to SIRA \cite{jia2014inner}.

Another alternative correction equation in \cite{deSturler2002ImprovingTC,genseberger1999alternative}
applied to our current SVD context is
\begin{equation}\label{correctionV}
	\begin{bmatrix}	I-\tilde U\tilde U^T&\\&I-\tilde V\tilde U^T \end{bmatrix}
	\begin{bmatrix} -\tau I&A\\A^T&-\tau I \end{bmatrix}
	\begin{bmatrix}	I-\tilde U\tilde U^T&\\&I-\tilde V\tilde V^T \end{bmatrix}
	\begin{bmatrix} s\\ t\end{bmatrix}
	=-r_1
\end{equation}
for $(s,t)\perp\perp(\tilde U,\tilde V)$. We will come back to this equation in the next
section and show why it is less effective than \eqref{correction}.

The convergence results to be presented shall indicate
that MINRES may converge very slowly for \eqref{correction}
when $\sigma_1$ is clustered with some
other singular values of $A$, which is proved to correspond to the case
that the coefficient matrix, denoted by $\widetilde{B}_1$ from now on, in
\eqref{correction}
has a cluster of small eigenvalues in magnitude.
Strikingly, as JDSVD proceeds, the subspaces $\UU$ and $\VV$ contain
increasingly more accurate approximations to the singular vectors of $A$
with  {\em all those} singular values closest to $\tau$, and are shown to
provide rich information on the eigenvectors of $\widetilde B_1$
associated with all the smallest eigenvalues in magnitude.  As a result,
these facts motivate us to fully exploit such information
to set up a new correction equation that is easier to be solved by MINRES
and, meanwhile, its solution expands $\UU$ and $\VV$ as effectively as that of \eqref{correction}.

\section{The JDSVD-V method}\label{sec:4}

In \eqref{svalues}, suppose that there are $m$ clustered singular
values of $A$ closest to $\tau$:
\begin{equation}\label{svalues22}
	|\sigma_1-\tau|\leq\dots \leq |\sigma_{m}-\tau|\ll
	|\sigma_{m+1}-\tau|\leq\dots\leq|\sigma_N-\tau|.
\end{equation}
From \eqref{svdA}, we denote
\begin{equation}\label{blocksvd}
	\Sigma_m=\diag\{\sigma_1,\dots,\sigma_m\},\qquad
	U_m=[u_{1},\dots,u_{m}],\qquad
	V_m=[v_{1},\dots,v_{m}].
\end{equation}
$(\Sigma_m,U_m,V_m)$ is a partial
SVD of $A$ with the $m$ clustered singular values closest to $\tau$.

For the Ritz approximations in \eqref{appsingular} computed at the $k$th outer iteration of JDSVD with $k>m$,
where $(\theta_1,\tilde u_1,\tilde v_1)$ is an approximation to the desired $(\sigma_1,u_1,v_1)$,
take $(\theta_i,\tilde u_i,\tilde v_i)$, $i=2,\dots,m$ to be approximations to the singular triplets
$(\sigma_i,u_i,v_i)$, $i=2,\dots,m$ of $A$.
Denote
\begin{equation}\label{UUVVtilde}
	\Theta_m=\diag\{\theta_1,\dots,\theta_m\},\qquad
	\widetilde U_m=[\tilde u_1,\dots,\tilde u_m],\qquad
	\widetilde V_m=[\tilde v_1,\dots,\tilde v_m].
\end{equation}
Then $(\Theta_m,\widetilde U_m,\widetilde V_m)$ is an
approximation to $(\Sigma_m,U_m,V_m)$.

\subsection{A new correction equation}\label{subsec1}
By \eqref{appsingular}--\eqref{resperp} and \eqref{UUVVtilde}, we have
\begin{equation*}
	r_1\perp\perp\left(\widetilde U_m,\widetilde V_m \right) \mbox{,\ \ i.e.,\ \ }
	r_1=\diag\{I-\widetilde U_m\widetilde U_m^T,I-\widetilde V_m\widetilde V_m^T\}r_1.
\end{equation*}
Therefore, premultiplying \eqref{correction} by
$\diag\{I-\widetilde U_m\widetilde U_m^T,I-\widetilde V_m\widetilde V_m^T\}$ yields
\begin{equation}\label{correction3}
	\begin{bmatrix}	I-\widetilde U_m\widetilde U_m^T&\\
		&I-\widetilde V_m\widetilde V_m^T \end{bmatrix}
	\begin{bmatrix} -\tau I&A\\A^T&-\tau I \end{bmatrix}
	\begin{bmatrix} s\\ t\end{bmatrix}
	=-r_1,
\end{equation}
%where we have exploited the fact that $(s,t)\perp\perp(\tilde u_1,\tilde v_1)$
%and the vectors $\tilde u_1$ and $\tilde v_1$ are the first columns of
%$\widetilde U_m$ and $\widetilde V_m$, respectively.
Writing
\begin{equation*}
	\begin{bmatrix} s\\ t\end{bmatrix}
	= \begin{bmatrix}I-\widetilde U_m\widetilde U_m^T&\\
		&I-\widetilde V_m\widetilde V_m^T \end{bmatrix}
	\begin{bmatrix} s\\ t\end{bmatrix}+	
	\begin{bmatrix}	 \widetilde U_m\widetilde U_m^T&\\
		& \widetilde V_m\widetilde V_m^T \end{bmatrix}
	\begin{bmatrix} s\\ t\end{bmatrix}
\end{equation*}
and substituting it into \eqref{correction3}, we obtain
\begin{equation}\label{correction4}
	\begin{bmatrix}	I-\widetilde U_m\widetilde U_m^T\!\!\!\!&\\
		&\!\!\!\!I-\widetilde V_m\widetilde V_m^T \end{bmatrix}
	\begin{bmatrix} -\tau I&A\\A^T&-\tau I \end{bmatrix}
	\begin{bmatrix}	I-\widetilde U_m\widetilde U_m^T\!\!\!\!&\\
		&\!\!\!\!I-\widetilde V_m\widetilde V_m^T \end{bmatrix}
	\begin{bmatrix} s\\ t\end{bmatrix}
	=-r_1-r_{\mathrm{tail}},
\end{equation}
where, by $\Theta_m=\widetilde U_m^TA\widetilde V_m$, the tail term
\begin{eqnarray}
	r_{\mathrm{tail}}
	&=&\begin{bmatrix}	I-\widetilde U_m\widetilde U_m^T&\\
		&I-\widetilde V_m\widetilde V_m^T \end{bmatrix}
	\begin{bmatrix} -\tau I&A\\A^T&-\tau I \end{bmatrix}
	\begin{bmatrix}	 \widetilde U_m\widetilde U_m^T&\\
		& \widetilde V_m\widetilde V_m^T \end{bmatrix}
	\begin{bmatrix} s\\ t\end{bmatrix} \nonumber \\
	&=&\begin{bmatrix}	I-\widetilde U_m\widetilde U_m^T&\\
		&I-\widetilde V_m\widetilde V_m^T \end{bmatrix}
	\begin{bmatrix} -\tau I&A\\A^T&-\tau I \end{bmatrix}
	\begin{bmatrix}	 \widetilde U_m&\\& \widetilde V_m \end{bmatrix}
	\begin{bmatrix} \widetilde U_m^Ts\\ \widetilde V_m^Tt\end{bmatrix} \nonumber\\
	&=&\begin{bmatrix} & A\widetilde V_m-\widetilde U_m\Theta_m \\ A^T\widetilde
		U_m-\widetilde V_m\Theta_m\end{bmatrix}
	\begin{bmatrix} \widetilde U_m^Ts\\ \widetilde V_m^Tt\end{bmatrix}.\label{tail}
\end{eqnarray}

From \eqref{correction}, the first elements of $\widetilde U_m^Ts$
and $\widetilde V_m^Tt$ are $\tilde u_1^Ts=0$ and $\tilde v_1^Tt=0$.
Denote $\widetilde U_m^{\prime}=[\tilde u_2,\dots,\tilde u_m]$,
$\widetilde V_m^{\prime}=[\tilde v_2,\dots,\tilde v_m]$ and
$\Theta_m^{\prime}=[\theta_2,\dots,\theta_m]$. Then \eqref{tail} becomes
\begin{equation*}
	r_{\mathrm{tail}}=\begin{bmatrix} & \!\!\!\! A\widetilde V_m^{\prime}-
		\widetilde U_m^{\prime}\Theta_m^{\prime} \\
		A^T\widetilde U_m^{\prime}
		-\widetilde V_m^{\prime}\Theta_m^{\prime}&\end{bmatrix}
	\begin{bmatrix} (\widetilde U_m^{\prime})^Ts\\
		(\widetilde V_m^{\prime})^Tt\end{bmatrix}
	=\begin{bmatrix} & \!\!\!\! R_1 \\  R_2&\end{bmatrix}
	\begin{bmatrix} (\widetilde U_m^{\prime})^Ts\\
		(\widetilde V_m^{\prime})^Tt\end{bmatrix},
\end{equation*}
where $R_1=A\widetilde V_m^{\prime}-\widetilde U_m^{\prime}\Theta_m^{\prime}$
and $R_2=A^T\widetilde U_m^{\prime}-\widetilde V_m^{\prime}\Theta_m^{\prime}$,
whose $(i-1)$-th columns are respectively the upper and lower parts of the
residual $r_i$
of $(\theta_{i},\tilde u_{i},\tilde v_{i})$, $i=2,\dots,m$; see \eqref{residual}.
Obviously, $(\widetilde U_m^{\prime})^Ts$ and $(\widetilde V_m^{\prime})^Tt$
are no larger than $s$ and $t$ in size.
By the standard perturbation theory \cite{golub2012matrix,parlett1998symmetric,stewart2001matrix}, it is known that
\begin{itemize}
\item $\|s\|$ and $\|t\|$ are of the same order as
the errors of $\tilde u_i$ and $\tilde v_i$, respectively;
\item $\|r_i\|$ is of the same order
as the errors of $\tilde u_{i},\tilde v_{i}$, $i=1,\dots,m$.
\end{itemize}
Therefore, whenever $\|r_i\|, i=2,\ldots,m$ are comparable to $\|r_1\|$,
one has $\|r_{\rm tail}\|=\mathcal{O}(\|r_1\|^2)$ and thus $r_1+r_{\rm tail}\approx r_1$
in \eqref{correction4} once \emph{$\|r_1\|$ becomes fairly small}.

Omitting the second order small term $r_{\mathrm{tail}}$ in
\eqref{correction4}, we have now obtained a new correction equation:
\begin{equation}\label{precorrection}
	\begin{bmatrix}	I-\widetilde U_m\widetilde U_m^T\!\!&
		\\&\!\!I-\widetilde V_m\widetilde V_m^T \end{bmatrix}
	\begin{bmatrix} -\tau I&A\\A^T&-\tau I \end{bmatrix}
	\begin{bmatrix}	I-\widetilde U_m\widetilde U_m^T\!\!&
		\\&\!\!I-\widetilde V_m\widetilde V_m^T \end{bmatrix}
	\begin{bmatrix} s\\ t\end{bmatrix} = -r_1.
\end{equation}
Its solution $(s,t)\perp\perp(\widetilde U_m,\widetilde V_m)$
is approximately equal to that of \eqref{correction} and
can thus expand  the current subspaces $\UU$ and $\VV$ as effectively as
the solution $(s,t)$ of \eqref{correction}.
% provided that $(\theta_i,\tilde u_i,\tilde
%v_i)$, $i=2,\dots,m$ have comparable accuracy to
%$(\theta_1,\tilde u_1,\tilde v_1)$.
%We shall prove that
%MINRES for \eqref{precorrection} will converge much faster than it does for
%\eqref{correction} provided that $\sigma_1$ is close to $\sigma_2$ but
%well separated from $\sigma_{m+1}$.
%In this sense, \eqref{precorrection} can be regarded as
%preconditioning \eqref{correction}.
The resulting method is the basic JDSVD-V.
%, and can also be
%regarded as a new JDSVD method with the correction equation
%\eqref{precorrection} solved at each outer iteration.
With an approximate solution $(\tilde s^T,\tilde t^T)^T$ of
\eqref{precorrection} available, we orthonormalize $\tilde s$
and $\tilde t$ against $\widetilde U$ and $\widetilde V$
to obtain the subspace expansion vectors $u_+$ and $v_+$.
% which are used to expand
%the subspaces in a regular way. The above derivations and
%analysis indicate that the solution to equation \eqref{precorrection}
%expands the searching subspaces as effectively as the solution to
%the standard correction equation \eqref{correction} does as long as
%$\|r_i\|\approx\|r_1\|,i=2,\dots,m$ in size.
%In other words, JDSVD-V does not spoil the subsequent outer iterations of JDSVD.
\begin{remark}
If $(\theta_1,\tilde{u}_1,\tilde{v}_1)$ is not yet a fairly
good approximation to $(\sigma_1,u_1,v_1)$, then $r_1$ is not small. Neither
is $r_{\rm tail}$. In this case, there is no theoretical winner between the solutions of
\eqref{correction4} and \eqref{precorrection}, and the two solutions
expand $\UU$ and $\VV$ equally effectively. We can argue why it is so:
The solution of each of the two correction equations amounts to one-step
inverse iteration for a {\em general} starting vector, so that their solutions
contribute to $u_1$ and $v_1$ approximately equally, i.e., $\UU$ and $\VV$ are expanded
equally effectively. Therefore,
since \eqref{correction} and \eqref{correction4} are equivalent,
we can always replace the standard correction equation \eqref{correction}
by the equally effective new \eqref{precorrection} no matter
whether or not $(\theta_1,\tilde{u}_1,\tilde{v}_1)$ is a fairly good approximation to
$(\sigma_1,u_1,v_1)$.
\end{remark}

\begin{remark}
Contrary to what might be thought, a JD method for the eigenvalue problem
converges linearly {\em rather than}
asymptotically quadratically for $\tau$ fixed or dynamically replaced by
the current approximate eigenvalue
as the method converges when solving
the standard and new correction equations approximately \cite[Sect. 2 of Chap. 6]{stewart2001matrix}.
This property straightforwardly applies to JDSVD. However,
it seems that such important property of a JD method has
received much less attention.
\end{remark}

\subsection{Convergence analysis on MINRES for solving \eqref{precorrection}}\label{subsec2}
In this section, we make a convergence analysis on MINRES
for the new correction equation \eqref{precorrection}.
Note that when $m=1$, \eqref{precorrection} degenerates to
\eqref{correction}. Therefore, we obtain the convergence results of MINRES for \eqref{correction} when
setting $m=1$.

Observe that the coefficient matrix in \eqref{precorrection}
is the restriction of
$B=\begin{bmatrix}\begin{smallmatrix}
		-\tau I&A\\A^T&-\tau I
\end{smallmatrix}\end{bmatrix}$
to the subspace
\begin{equation*}
	(\widetilde U_m,\widetilde V_m)^{\perp\perp}:=\{(x,y)\vert x
	\in\mathbb{R}^{M},y\in\mathbb{R}^N,
	x\perp\widetilde U_m, y\perp\widetilde V_m \},
\end{equation*}
and maps the subspace to itself.
Let $\widetilde P_m\in\mathbb{R}^{M\times(M-m)}$
and $\widetilde Q_m\in\mathbb{R}^{N\times(N-m)}$
be such that the matrices $[\widetilde U_m,\widetilde P_m]$ and
$[\widetilde V_m,\widetilde Q_m]$ are orthogonal. Then the columns of
\begin{equation}\label{WWm}
	\widetilde W_m=\begin{bmatrix}
		\widetilde P_m&\\&\widetilde Q_m
	\end{bmatrix}\in\mathbb{R}^{(M+N)\times(M+N-2)}
\end{equation}
form an orthonormal basis of $(\widetilde U_m,\widetilde V_m)^{\perp\perp}$,
and the orthogonal projector in \eqref{precorrection} is
\begin{equation*}
	\begin{bmatrix}
		I-\widetilde U_m\widetilde U_m^T&\\&I-\widetilde V_m\widetilde V_m^T
	\end{bmatrix}=\begin{bmatrix}
		\widetilde P_m\widetilde P_m^T&\\&\widetilde Q_m\widetilde Q_m^T
	\end{bmatrix}=\widetilde W_m\widetilde W_m^T.
\end{equation*}
Denote by $w=(s^T,t^T)^T$. Then the correction equation \eqref{precorrection} is
\begin{equation}\label{newform}
	\widetilde B_m w=-r_1 \qquad\mbox{with}\qquad w\in\mathcal{R}(W_m),
\end{equation}
where $\mathcal{R}(\cdot)$ denotes the column space of a matrix, and
\begin{equation}\label{Bm}
	\widetilde B_m=\widetilde W_m\widetilde B_m^{\prime}\widetilde W_m^T
	\quad\mbox{with}\quad
	\widetilde B_m^{\prime}=
\widetilde W_m^T	B\widetilde W_m
=\begin{bmatrix}-\tau I_{M-m}
		&\widetilde P_m^TA \widetilde Q_m
		\\ \widetilde Q_m^TA^T\widetilde P_m&-\tau I_{N-m}\end{bmatrix}.
\end{equation}
Let $\widetilde w_{j}=(\tilde s_j^T,\tilde t_j^T)^T$ be the
approximate solution of \eqref{newform} computed
by MINRES at the $j$th iteration,
$j=1,2,\ldots,M+N-2m$ with an initial guess $\widetilde w_0\in\mathcal{R}(W_m)$.
Then the corresponding residuals are
\begin{equation}\label{innerresidual}
	\qquad r_{\mathrm{in},j}=\widetilde B_m\widetilde w_j+r_1,\qquad\qquad
	j=0,1,\ldots,M+N-2m.
	\hspace{-6em}
\end{equation}

We now present an equivalent form of \eqref{precorrection} to be used later.

\begin{theorem}\label{thm6}
	With $\widetilde B_m^{\prime}\in \mathbb{R}^{(M+N-2m)\times (M+N-2m)}$ defined in \eqref{Bm},
	the solution to the correction equation
	\eqref{precorrection} is $w=\widetilde W_mz$, where $z=\widetilde
	W_m^Tw\in\mathbb{R}^{M+N-2m}$ solves the linear system
	\begin{equation}\label{precorrection2}
		\widetilde B_m^{\prime}z=-r^{\prime}_1
		\qquad\mbox{with}\qquad
		r^{\prime}_1=\widetilde W_m^Tr_1.
	\end{equation}
	If MINRES is applied to \eqref{precorrection2} with the
	initial guess $\tilde z_0=\widetilde
	W_m^Tw_0$, then the $j$th
	approximate solution $\tilde z_j=\widetilde W_m^T\widetilde w_j$
	with $\widetilde w_j=\widetilde W_m\tilde z_j$, $j=1,\ldots,M+N-2m$.
Denote by
$r_{\mathrm{in},j}^{\prime}=\widetilde B_m^{\prime} \tilde
	z_j+r^{\prime}_1$
the residual of $\tilde z_j$. Then
	\begin{equation}\label{resinorm2}
		\qquad\qquad \|r_{\mathrm{in},j}\|
		=\|r^{\prime}_{\mathrm{in},j}\|,\qquad j=1,2,\dots,M+N-2m.
	\end{equation}
\end{theorem}

\begin{proof}
	By \eqref{resperp} and \eqref{precorrection}, since
	$r_1\in(\widetilde U_m,\widetilde V_m)^{\perp\perp}$ and
	$w\in(\widetilde U_m,\widetilde V_m)^{\perp\perp}$, we have
	\begin{equation*}
		r_1=\widetilde W_m\widetilde W_m^Tr_1=\widetilde W_mr_1^{\prime}
		\qquad\mbox{and}\qquad
		w=\widetilde W_m\widetilde W_m^Tw=\widetilde W_mz
	\end{equation*}
	with $r^{\prime}_1=\widetilde W_m^Tr_1$ and $z=\widetilde W_m^Tw$.
	Substituting them and \eqref{Bm} into \eqref{newform}
	yields
	\begin{equation*}
		\widetilde W_m \widetilde B_m^{\prime}\widetilde W_m^T\widetilde W_mz
		=\widetilde W_m \widetilde B_m^{\prime}z
		=-\widetilde W_mr_1^{\prime},
	\end{equation*}
	premultiplying which by $\widetilde W_m^T$ leads to
	\eqref{precorrection2}.
	
	Write the $j$-dimensional Krylov subspace
	\begin{equation*}
		\mathcal{K}_j(\widetilde B_m,r_{\mathrm{in},0})
		=\mathrm{span}\{r_{\mathrm{in},0},\widetilde B_mr_{\mathrm{in},0},
		\widetilde B_m^2r_{\mathrm{in},0},\dots,\widetilde B_m^{j-1}r_{\mathrm{in},0}\}.
	\end{equation*}
	Since $\widetilde w_j\in\widetilde w_0+\mathcal{K}_j(\widetilde B_m,r_{\mathrm{in},0})$
	satisfies
	\begin{equation}\label{solution}
		\|r_{\mathrm{in},j}\|=\|\widetilde B_m\widetilde w_j+r_1\|=
		\min_{\widetilde w\in\widetilde w_0+\mathcal{K}_j(\widetilde B_m,r_{\mathrm{in},0})}
		\|\widetilde B_m\widetilde w+r_1\|,
	\end{equation}	
	by definition it is straightforward to verify from \eqref{Bm} and \eqref{innerresidual} that
	\begin{equation*}
		\mathcal{K}_j(\widetilde B_m,r_{\mathrm{in},0})=
		\mathcal{K}_j( \widetilde W_m\widetilde B_m^{\prime}\widetilde W_m^T,
		\widetilde W_mr^{\prime}_{\mathrm{in},0})
		=\widetilde W_m\mathcal{K}_j(\widetilde B_m^{\prime},r_{\mathrm{in},0}^{\prime});
	\end{equation*}
	that is, for any $\widetilde w\in \mathcal{K}_j(\widetilde B_m,r_{\mathrm{in},0})$,
	there exists $\tilde z\in\mathcal{K}_j(\widetilde B_m^{\prime},r_{\mathrm{in},0}^{\prime})$
	such that $\widetilde w=\widetilde W_m\tilde z$ and thus
	$\tilde z=\widetilde W_m^T\widetilde w$, $j=1,2,\ldots,M+N-2m$.
	Therefore, notice that the initial guesses for \eqref{newform} and \eqref{precorrection2} satisfies
$\widetilde w_0=\widetilde W_m\tilde z_0$.
From \eqref{solution} we obtain
\begin{eqnarray*}
\|r_{\mathrm{in},j}\| &=&
		\min_{\widetilde w\in\widetilde w_0+\mathcal{K}_j(\widetilde B_m,r_{\mathrm{in},0})}
		\|\widetilde W_m \widetilde B_m^{\prime}\widetilde W_m^T\widetilde w+\widetilde W_mr^{\prime}_1\|  \nonumber \\
&=&\min_{\tilde z\in \widetilde z_0+\mathcal{K}_j(\widetilde B_m^{\prime},r_{\mathrm{in},0}^{\prime})}
		\|\widetilde B_m^{\prime}\tilde z+r^{\prime}_1\|\nonumber \\
&=&\|r_{\mathrm{in},j}^{\prime}\|,
\end{eqnarray*}
	which proves \eqref{resinorm2} and $\widetilde
	w_j=\widetilde W_m\tilde z_j$.
\end{proof}

\cref{thm6} shows that the convergence of MINRES
for \eqref{precorrection} and \eqref{precorrection2} is identical.
Without loss of generality, take
$\widetilde w_0=\bm{0}$ and $\tilde z_0=\bm{0}$. Then
\begin{equation}\label{initial}
	r_{\mathrm{in},0}=r_1
	\qquad\mbox{and}\qquad
	r_{\mathrm{in},0}^{\prime}=r^{\prime}_1.
\end{equation}
Denote by $\tilde \lambda_i,i=1,\dots,M+N-2m$ the eigenvalues of
$\widetilde B_m^{\prime}$ in \eqref{Bm}.
It is known from \cite{greenbaum1997iterative}
that the residual norm of the approximate
solution $\tilde z_j$ of \eqref{precorrection2} satisfies
\begin{equation}\label{resminres}
	\frac{\|r_{\mathrm{in},j}^{\prime}\|}{\|r^{\prime}_1\|}\leq\min_{\pi\in\Pi_{j}}
	\max_{1\leq i\leq M+N-2m}|\pi(\tilde \lambda_i)|,
\end{equation}
where $\Pi_{j}$ denotes the set of polynomials of degree no more than $j$ and
$\pi(0)=1$ for $\pi\in\Pi_j$.
By \eqref{resinorm2} and \eqref{resminres}, we have the following estimates.

\begin{theorem}[{\cite[Chapter 3]{greenbaum1997iterative}}]\label{thm2}
	{\rm (\romannumeral 1)} If the eigenvalues $\tilde\lambda_i$  of
	$\widetilde B_m^{\prime}$ lie in the interval $[-\beta,-\alpha]$
	or $[\alpha,\beta]$ with $0<\alpha<\beta$, then
	\begin{equation}\label{convergesimplea}
		\frac{\|r_{\mathrm{in},j}\|}{\|r_1\|}\leq
		2\left(1-\tfrac{2}{1+\sqrt{\beta}/\sqrt{\alpha}}\right)^{j};
	\end{equation}
	{\rm (\romannumeral 2)} If the $\tilde\lambda_i\in [-\beta_1,-\alpha_1]\cup[\alpha_2,\beta_2]$ with
	$\alpha_1,\alpha_2,\beta_1,\beta_2>0$ and $\beta_2-\alpha_2=\beta_1-\alpha_1$, then
	\begin{equation}\label{convergesimple}
		\frac{\|r_{\mathrm{in},j}\|}{\|r_1\|}\leq 2\left(1-\frac{2}{1+ \sqrt{\beta_1\beta_2}/\sqrt{\alpha_1\alpha_2}}\right)^{\left\lbrack\frac{j}{2}\right\rbrack},
	\end{equation}
	where $\left\lbrack\frac{j}{2}\right\rbrack$ denotes the integer part of $\frac{j}{2}$.
\end{theorem}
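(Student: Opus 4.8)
The plan is to derive both estimates from the standard MINRES polynomial bound \eqref{resminres} combined with \cref{thm1} and \eqref{initial}, which together give $\|r_{\mathrm{in},j}\|=\|r_{\mathrm{in},j}^{\prime}\|$ and $\|r\|=\|r^{\prime}\|$. Thus it suffices to bound the right-hand side of \eqref{resminres}: since every eigenvalue $\tilde\lambda_i$ of $\widetilde B_1^{\prime}$ lies in the prescribed set $S$ (one interval in part~(\romannumeral1), the union of two in part~(\romannumeral2)), I would relax $\max_{1\le i\le M+N-2}|\pi(\tilde\lambda_i)|$ to $\max_{x\in S}|\pi(x)|$ and then exhibit a polynomial $\pi\in\Pi_j$, built from a translated and scaled Chebyshev polynomial, that is uniformly small on $S$. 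Throughout I would invoke the elementary fact that, for $\kappa>1$ and any integer $n\ge0$,
\[
\frac{1}{T_n\!\left(\tfrac{\kappa+1}{\kappa-1}\right)}\ \le\ 2\left(\frac{\sqrt{\kappa}-1}{\sqrt{\kappa}+1}\right)^{n}=2\left(1-\frac{2}{1+\sqrt{\kappa}}\right)^{n},
\]
where $T_n$ is the Chebyshev polynomial of the first kind of degree $n$; this follows by writing $\tfrac{\kappa+1}{\kappa-1}=\cosh\zeta$, so $T_n(\cosh\zeta)=\cosh(n\zeta)\ge\tfrac12 e^{n\zeta}$ with $e^{\zeta}=\tfrac{\sqrt\kappa+1}{\sqrt\kappa-1}$.

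For part~(\romannumeral1), after the reflection $x\mapsto-x$ (which replaces any $\pi(x)$ by $\pi(-x)$, again in $\Pi_j$) I may assume $\tilde\lambda_i\in[\alpha,\beta]$. I would take
\[
\pi(x)=\frac{T_j\!\left(\frac{\beta+\alpha-2x}{\beta-\alpha}\right)}{T_j\!\left(\frac{\beta+\alpha}{\beta-\alpha}\right)}\ \in\ \Pi_j ,
\]
which satisfies $\pi(0)=1$; because $\tfrac{\beta+\alpha-2x}{\beta-\alpha}\in[-1,1]$ for $x\in[\alpha,\beta]$ and $|T_j|\le1$ on $[-1,1]$, the numerator is at most $1$ on $[\alpha,\beta]$, so $\max_{x\in[\alpha,\beta]}|\pi(x)|\le 1/T_j\!\left(\tfrac{\beta+\alpha}{\beta-\alpha}\right)$. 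Applying the displayed Chebyshev bound with $\kappa=\beta/\alpha$, for which $\tfrac{\beta+\alpha}{\beta-\alpha}=\tfrac{\kappa+1}{\kappa-1}$, and inserting the result into \eqref{resminres} yields \eqref{convergesimplea}.

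Part~(\romannumeral2) is the substantive case. The key observation is that, since the two intervals have the common length $d_0:=\beta_1-\alpha_1=\beta_2-\alpha_2$, there is a quadratic $q(x)=\gamma(x-v)^2+\delta$ with $\gamma>0$ that maps each of $[-\beta_1,-\alpha_1]$ and $[\alpha_2,\beta_2]$ bijectively onto one and the same interval $[L,R]$: imposing $q(-\alpha_1)=q(\alpha_2)$ and $q(-\beta_1)=q(\beta_2)$ forces the vertex $v=\tfrac12(\alpha_2-\alpha_1)=\tfrac12(\beta_2-\beta_1)$, and the two requirements are compatible precisely because the lengths coincide; one then checks that $v$ lies strictly between the two intervals (so $q$ is monotone on each) and that $q(0)<L$. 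With $n:=\lfloor j/2\rfloor$ I would take
\[
\pi(x)=\frac{T_n\!\left(\frac{R+L-2q(x)}{R-L}\right)}{T_n\!\left(\frac{R+L-2q(0)}{R-L}\right)},
\]
a polynomial of degree $2n\le j$ with $\pi(0)=1$, so that $|\pi(x)|\le 1/T_n(\xi)$ on both intervals with $\xi:=\tfrac{R+L-2q(0)}{R-L}>1$. The crux of the computation is that $\tfrac{\xi+1}{\xi-1}=\tfrac{R-q(0)}{L-q(0)}$, and, using $\alpha_1+2v=\alpha_2$ and $\beta_1+2v=\beta_2$, that $R-q(0)=\gamma\beta_1\beta_2$ and $L-q(0)=\gamma\alpha_1\alpha_2$; the free parameters cancel and $\kappa:=\tfrac{\xi+1}{\xi-1}=\tfrac{\beta_1\beta_2}{\alpha_1\alpha_2}$. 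Feeding $\kappa=\tfrac{\beta_1\beta_2}{\alpha_1\alpha_2}$ into the Chebyshev bound and then into \eqref{resminres} produces \eqref{convergesimple}.

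The step I expect to be the main obstacle is the construction of the quadratic $q$ in part~(\romannumeral2): verifying that equal interval lengths is exactly the compatibility condition that lets a single quadratic fold both intervals onto a common $[L,R]$ with $q(0)$ to its left, and that the ensuing algebra collapses cleanly to the ratio $\beta_1\beta_2/(\alpha_1\alpha_2)$. Once that is in place, part~(\romannumeral1) and the reduction from \eqref{resminres} are routine.
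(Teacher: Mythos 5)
Your proposal is correct. The paper does not prove \cref{thm2} at all---it imports the result verbatim from \cite[Chapter 3]{greenbaum1997iterative}---and your argument (the shifted-and-scaled Chebyshev polynomial for a single interval, and the Chebyshev polynomial composed with the quadratic $q(x)=\gamma(x-v)^2+\delta$ that folds two equal-length intervals onto a common image, with the ratio collapsing to $\beta_1\beta_2/(\alpha_1\alpha_2)$) is precisely the standard proof given in that reference, so there is nothing to reconcile.
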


This theorem indicates that the smaller $\beta-\alpha$ or $\beta_1-\alpha_1$ is,
the closer $\beta/\alpha$ or
$(\beta_1\beta_2)/(\alpha_1\alpha_2)$ is to one and thus the faster MINRES
converges. Bound \eqref{convergesimplea} is
the same as that for the CG method for solving
the same linear system \cite{greenbaum1997iterative}, where
the convergence factor is $\frac{\sqrt{\kappa(\widetilde B_m^{\prime})}-1}
{\sqrt{\kappa(\widetilde B_m^{\prime})}+1}$
with $\kappa(\widetilde B_m^{\prime})=\beta/\alpha$.
Suppose that $[-\beta_1,-\alpha_1]$ and
$[\alpha_2,\beta_2]$ are symmetric with respect to the origin, i.e.,
$\alpha_1=\alpha_2=\alpha$, $\beta_1=\beta_2=\beta$. Then the factor
in the right-hand side of \eqref{convergesimple} is
$\frac{\kappa(\widetilde B_m^{\prime})-1}{\kappa(\widetilde B_m^{\prime})+1}$,
the same as that for the steepest descent method applied to
a positive definite \eqref{precorrection2} with the
eigenvalue interval $[\alpha,\beta]$, but
the power is $\left\lbrack\frac{j}{2}\right\rbrack$ rather than $j$.
As a result, MINRES may converge much
more slowly for a highly indefinite linear system than it does
for a definite one.

Our next goal is to find the most compact interval or interval union
that contains all the eigenvalues $\tilde \lambda_i,i=1,\dots,M+N-2m$
of $\widetilde B_m^{\prime}$ in \eqref{Bm}, so that we are able to estimate the convergence rates
of MINRES as accurately as possible.

Before proceeding, we first look at the ideal case that
$\RR(\widetilde U_m)=\RR(U_m)$ and $\RR(\widetilde
V_m)=\RR(V_m)$. By \eqref{svdA}, we can take $\widetilde P_m= P_m$ and
$\widetilde Q_m=Q_m$ in \eqref{WWm} with
\begin{equation} \label{PPQQme}
	P_m=[u_{m+1},\dots,u_{M}]
	\qquad\mbox{and}\qquad
	Q_m=[v_{m+1},\dots,v_{N}].
\end{equation}
In this case, the matrix $\widetilde B_m^{\prime}$
defined in \eqref{Bm} reduces to the specially structured
\begin{equation}\label{BBmme}
	B_m^{\prime}=\begin{bmatrix} -\tau I_{M-m} &  P_m^TA  Q_m
		\\   Q_m^TA^TP_m & -\tau I_{N-m} \end{bmatrix}=
	\begin{bmatrix} -\tau I_{M-m} &  \Sigma_m^{\prime}
		\\    (\Sigma_m^{\prime})^T & -\tau I_{N-m} \end{bmatrix},	
\end{equation}
where
\begin{equation} \label{Sigmap}
	\Sigma_m^{\prime}= P_m^T A Q_m
	= \diag\{\sigma_{m+1},\dots,\sigma_{N}\}
	\in\mathbb{R}^{(M-m)\times(N-m)}.
\end{equation}
Therefore, the eigenvalues $\lambda_i$ of $B_m^{\prime}$ are
\begin{equation}\label{eigBm}
	\pm\sigma_{m+1}-\tau,\dots,\pm\sigma_{N}-\tau\qquad\mbox{and}\qquad
	\underbrace{-\tau, \dots,-\tau}_{M-N}.	
\end{equation}

In actual computations, however, we do not have $\RR(\widetilde U_m)=\RR(U_m)$ and $\RR(\widetilde
V_m)=\RR(V_m)$. Denote by
\begin{equation}\label{canonical}
	\Phi_m=\angle\left(\RR(\widetilde U_m), \RR(U_m)\right)
	\quad\mbox{and}\quad
	\Psi_m=\angle\left(\RR(\widetilde V_m), \RR(V_m)\right)
\end{equation}
the canonical angle matrices \cite[p.~329]{golub2012matrix} between $\RR(\widetilde U_m)$ and
$\RR(U_m)$ and that between $\RR(\widetilde V_m)$ and $\RR(V_m)$, respectively.
We next investigate the convergence of MINRES in detail. First of all,
we prove that $\widetilde B_m^{\prime}$ in \eqref{Bm} is orthogonally similar to
$B_m^{\prime}$ in \eqref{BBmme}
with the squared errors of $\RR(\widetilde U_m)$ and $\RR(\widetilde V_m)$, as shown below.

\begin{theorem}\label{thm8}
	$\widetilde B_m^{\prime}$ is orthogonally similar to $B_m^{\prime}$ with
	the error
	\begin{equation} \label{diffBBm}
		\|\widetilde B_m^{\prime}- Z^TB_m^{\prime}Z\|\leq
		\|A\|\left(\|\sin \Phi_m\|+\|\sin\Psi_m\| \right)^2,
	\end{equation}
	where $Z\in \mathbb{R}^{(M+N-2m)\times (M+N-2m)}$ is some orthogonal matrix; see
	\eqref{ZZm}.
\end{theorem}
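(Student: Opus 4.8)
The plan is to mirror the proof of \cref{thm4} but now with block matrices $\widetilde U_m,\widetilde V_m$ in place of the single vectors $\tilde u_1,\tilde v_1$, replacing the sines of angles by the canonical angle matrices $\sin\Phi$ and $\sin\Psi$. First I would write the orthogonal decompositions of the bases $\widetilde P_m$ and $\widetilde Q_m$ of the orthogonal complements: since $[\widetilde U_m,\widetilde P_m]$ and $[\widetilde V_m,\widetilde Q_m]$ are orthogonal and $[U_m,P_m]$, $[V_m,Q_m]$ are orthogonal, there are block relations
\begin{equation*}
	\widetilde P_m = P_m E_1 + U_m E_2,
	\qquad
	\widetilde Q_m = Q_m F_1 + V_m F_2,
\end{equation*}
with $E_1\in\mathbb{R}^{(M-m)\times(M-m)}$, $E_2\in\mathbb{R}^{m\times(M-m)}$ and analogous sizes for $F_1,F_2$. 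The key identities are $\|P_m^T\widetilde P_m$ defect$\|$-type relations giving $\|E_2\|=\|\widetilde P_m^TU_m\|=\|\sin\Phi\|$ and $\|F_2\|=\|\widetilde Q_m^TV_m\|=\|\sin\Psi\|$, together with $E_1^TE_1+E_2^TE_2=I$ and $F_1^TF_1+F_2^TF_2=I$.

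Next I would take SVDs $X_1^TE_1X_2=\Sigma_E$ and $Y_1^TF_1Y_2=\Sigma_F$ and, exactly as in \eqref{ICE}--\eqref{ICF}, bound $\|I-\Sigma_E\|\le\|I-E_1^TE_1\|=\|E_2\|^2=\|\sin\Phi\|^2$ and $\|I-\Sigma_F\|\le\|\sin\Psi\|^2$; here I use that $\Sigma_E,\Sigma_F$ are square nonnegative diagonal with norm at most one, so $\|I-\Sigma_E\|\le\|I-\Sigma_E^2\|$. Then, using $P_m^TAV_m=\bm{0}$ and $U_m^TAQ_m=\bm{0}$ (which hold because $U_m,V_m$ span invariant singular subspaces and $P_m,Q_m$ their complements), I would compute
\begin{equation*}
	\widetilde P_m^TA\widetilde Q_m = E_1^T\Sigma_m^{\prime}F_1 + E_2^T\Theta_m F_2
	= X_2\Sigma_E X_1^T\Sigma_m^{\prime}Y_1\Sigma_F Y_2^T + E_2^T\Theta_m F_2,
\end{equation*}
where $\Theta_m=\diag\{\sigma_1,\dots,\sigma_m\}=U_m^TAV_m$. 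Subtracting the "target" block $X_2X_1^T\Sigma_m^{\prime}Y_1Y_2^T$ and taking $2$-norms, the triangle inequality splits the difference into a $(\Sigma_E-I)$ term, a $(\Sigma_F-I)$ term, and the $E_2^T\Theta_m F_2$ term; bounding each by $\|A\|$ times $\|\sin\Phi\|^2$, $\|\sin\Psi\|^2$ and $\|\sin\Phi\|\|\sin\Psi\|$ respectively, and collapsing via $a^2+b^2+ab\le(a+b)^2$, yields the desired $\|A\|(\|\sin\Phi\|+\|\sin\Psi\|)^2$.

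Finally I would define the orthogonal matrix
\begin{equation} \label{ZZm}
	Z = \begin{bmatrix} X_1X_2^T & \\ & Y_1Y_2^T \end{bmatrix}
	\in\mathbb{R}^{(M+N-2m)\times(M+N-2m)}
\end{equation}
and verify, from the definitions \eqref{Bm} and \eqref{BBmme} of $\widetilde B_m^{\prime}$ and $B_m^{\prime}$, that the $-\tau I$ diagonal blocks are untouched by the conjugation while the off-diagonal block of $\widetilde B_m^{\prime}-Z^TB_m^{\prime}Z$ equals exactly $\widetilde P_m^TA\widetilde Q_m - X_2X_1^T\Sigma_m^{\prime}Y_1Y_2^T$, so \eqref{diffBBm} follows from the block bound above. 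I expect the main obstacle to be purely bookkeeping: namely checking that the block cross terms $P_m^TAV_m$ and $U_m^TAQ_m$ genuinely vanish (they do, since under $\RR(\widetilde U_m)$ replaced by comparison with $\RR(U_m)$ the reference subspaces $U_m,V_m$ diagonalize $A$ into $\Sigma_m$ while $P_m,Q_m$ carry the complementary singular values), and that the matrix norm inequalities survive the passage from scalars $\sin\phi,\sin\psi$ to the operator norms $\|\sin\Phi\|,\|\sin\Psi\|$ of the canonical angle matrices — this is where one must be careful that $\|E_2\|$ is the largest sine of a canonical angle, not a Frobenius-type quantity, so that the clean bound $a^2+b^2+ab\le(a+b)^2$ still applies termwise.
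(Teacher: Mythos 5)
Your proposal is correct and follows essentially the same route as the paper's proof: the same block decompositions $\widetilde P_m=P_mE_1+U_mE_2$, $\widetilde Q_m=Q_mF_1+V_mF_2$, the same SVD-based bounds $\|I-\Sigma_E\|\le\|\sin\Phi\|^2$, $\|I-\Sigma_F\|\le\|\sin\Psi\|^2$, and the same orthogonal matrix $Z=\diag\{X_1X_2^T,\,Y_1Y_2^T\}$. The only cosmetic difference is that you denote the exact middle block $U_m^TAV_m=\Sigma_m$ by $\Theta_m$, a symbol the paper reserves for the \emph{approximate} singular values; otherwise the cross term $E_2^T\Sigma_mF_2$ and the final estimate match the paper's argument exactly.
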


\begin{proof} 		
	By \eqref{PPQQme}, let us decompose the orthonormal
	$\widetilde P_m$ and $\widetilde Q_m$, defined in the beginning
	of  \cref{subsec2}, as the orthogonal sums
	\begin{equation}\label{PPQQm}
		\widetilde P_m=P_mE_1+U_mE_2
		\qquad\mbox{and}\qquad
		\widetilde Q_m=Q_mF_1+V_mF_2
	\end{equation}
	with $E_1\in\mathbb{R}^{(M-m)\times(M-m)}$,
	$E_2\in\mathbb{R}^{m\times(M-m)}$,
	$F_1\in\mathbb{R}^{(N-m)\times(N-m)}$,
	$F_2\in\mathbb{R}^{m\times(N-m)}$.
	By the definition of sines of the canonical angles between two subspaces, we have
	\begin{equation}	 \label{sins2}
		\|\sin\Phi_m\|
		= \|\widetilde P_m^TU_m\| = \|E_2\|
		\quad\mbox{and}\quad
		\|\sin\Psi_m\|
		= \|\widetilde Q_m^TV_m\| = \|F_2\|.
	\end{equation}
	Let the SVDs of $E_1$ and $F_1$ be
	\begin{equation}\label{SVDEFm}
		E_1=X_1\Sigma_EX_2^T
		\qquad\mbox{and}\qquad
		F_1=Y_1\Sigma_FY_2^T,
	\end{equation}
where $X_1,X_2\in\mathbb{R}^{(M-m)\times(M-m)}$ and
$Y_1,Y_2\in\mathbb{R}^{(N-m)\times(N-m)}$ are orthogonal,
and the square diagonal matrices $\Sigma_E$ and $\Sigma_F$ satisfy
$\|\Sigma_E\|=\|E_1\|\leq 1$ and   $\|\Sigma_F\|=\|F_1\|\leq 1$.
Therefore, making use of \eqref{sins2}--\eqref{SVDEFm},
$E_1^TE_1+E_2^TE_2=I$ and $F_1^TF_1+F_2^TF_2=I$, we obtain
\begin{eqnarray}
	\qquad\quad  \|I-\Sigma_E\|&\leq&
	\|I-\Sigma_E^T\Sigma_E\|=\|I-E_1^TE_1\|=\|E_2^TE_2\|
	= \|E_2\|^2=\|\sin\Phi_m\|^2, \label{ICE}\\
	 \|I-\Sigma_F\|&\leq&\|I-\Sigma_F^T\Sigma_F\|=\|I-F_1^TF_1\|
	 =\|F_2^TF_2\|= \|F_2\|^2=\|\sin\Psi_m\|^2. \label{ICF}
\end{eqnarray}

	By the definitions of $P_m,Q_m$ and $U_m,V_m$, we have
	$U_m^TAV_m=\Sigma_{m}$, $P_m^TAQ_m=\Sigma_{m}^{\prime}$
	and $P_m^TAV_m=\bm{0}$, $U_m^TAQ_m=\bm{0}$, where $\Sigma_m$
	and $\Sigma_{m}^{\prime}$ are defined in \eqref{blocksvd} and
	\eqref{Sigmap}.
	Therefore, exploiting \eqref{PPQQm} and \eqref{SVDEFm}, we have
	\begin{equation}\label{PmAQm}
		\widetilde P_m^TA\widetilde Q_m =
		E_1^T\Sigma_m^{\prime}F_1+E_2\Sigma_mF_2
		= X_2\Sigma_EX_1^T\Sigma_m^{\prime}Y_1\Sigma_FY_2^T+E_2\Sigma_mF_2.
	\end{equation}
	Note that $X_i,Y_i,i=1,2$ are orthogonal and
	$\|\Sigma_m^{\prime}\|\leq\|A\|$, $\|\Sigma_m\|\leq\|A\|$.
	Exploiting \eqref{sins2} and \eqref{ICE}--\eqref{PmAQm}, we obtain
	\begin{eqnarray}
		\|\!\widetilde P_{m}^TA\widetilde Q_{m}\!-\!
		X_{\!2}X_{1}^T\Sigma_{m}^{\prime}Y_{\!1} Y_{\!2}^T\|
		&=&\|
		\Sigma_EX_1^T\Sigma_m^{\prime}Y_{1}\Sigma_F-
		X_1^T\!\Sigma_m^{\prime}Y_{\!1}
		+X_2^TE_2\Sigma_mF_2Y_2\| \nonumber\\
		&\leq&
		\|\!(\Sigma_{E}\!-\!I)X_1^T\Sigma_m^{\prime}Y_1\Sigma_{F}
		\!+\!X_1^T\Sigma_m^{\prime}Y_1(\Sigma_{F}\!-\!I)\!\|
		\!+\!\|\!E_{2}\Sigma_{m}F_{2}\!\|\nonumber\\
		&\leq&
		\|\Sigma_E-I\|\|\Sigma_m^{\prime}\|+
		\|\Sigma_m^{\prime}\|\|\Sigma_F-I\|+\|E_2\|\|\Sigma_m\|\|F_2\|   \nonumber\\
		&\leq &
		\|A\|(\|\sin\Phi_m\|^2+\|\sin\Psi_m\|^2+\|\sin\Phi_m\|\|\sin\Psi_m\|) \nonumber\\
		&\leq&
		\|A\|(\|\sin\Phi_m\|+\|\sin\Psi_m\|)^2.  \label{diffPAQ}
	\end{eqnarray}
	
	Denote the orthogonal matrix
	\begin{equation}\label{ZZm}
		Z=\begin{bmatrix}
			X_1X_2^T&\\&Y_1Y_2^T
		\end{bmatrix}\in\mathbb{R}^{(M+N-2m)\times(M+N-2m)}.
	\end{equation}
	By the definitions of $\widetilde B_m^{\prime}$ in \eqref{Bm}
	and $B_m^{\prime}$  in \eqref{BBmme}, we obtain
	\begin{equation*}
		\|\widetilde B_m^{\prime}-Z^T B_m^{\prime}Z\|=
		\|\widetilde P_m^TA\widetilde Q_m-X_2X_1^T\Sigma_m^{\prime}Y_1Y_2^T\|.
	\end{equation*}
	Then \eqref{diffBBm} follows from applying \eqref{diffPAQ} to the above.
\end{proof}

Since the matrix $Z$ in \eqref{ZZm} is orthogonal,
the eigenvalues $\lambda_i$ of $Z^TB_m^{\prime}Z$
are those of $B_m^{\prime}$, i.e., the ones in \eqref{eigBm}.
Label the eigenvalues $\lambda_i$ and $\tilde\lambda_i$ of $B_m^{\prime}$ and
$\widetilde B_m^{\prime}$ in the ascending or descending order; see \eqref{eigBm} for $\lambda_i$.
By the eigenvalue perturbation theory (cf. Theorem~10.3.1 of
\cite{parlett1998symmetric}), we obtain
\begin{equation}\label{diffeigs}
	|\tilde\lambda_i-\lambda_i|\leq
	\|\widetilde B_m^{\prime}- Z^TB_m^{\prime}Z\|.
\end{equation}
Therefore,
Theorem~\ref{thm8} shows that as long as $\RR(\widetilde U_m)$ and
$\RR(\widetilde V_m)$ are good approximations to
the left and right singular subspaces $\RR(U_m)$ and
$\RR(V_m)$ of $A$, the eigenvalues $\tilde{\lambda}_i$ of $\widetilde B_m^{\prime}$
are better approximations to the eigenvalues of
$B_m^{\prime}$ with the squared errors of $\RR(\widetilde U_m)$ and
$\RR(\widetilde V_m)$.

Combining Theorems~\ref{thm6}--\ref{thm2} with Theorem~\ref{thm8},
we present the following convergence
results on MINRES for \eqref{precorrection}.

\begin{theorem}\label{thm9}
	With the notations of \cref{thm8}, assume that
	\begin{equation}\label{delta2}
		\delta_m:=\|A\|(\|\sin\Phi_m\|+\|\sin\Psi_m\|)^2<|\sigma_{m+1}-\tau|.
	\end{equation}
	{\rm (\romannumeral 1)} If $\tau>(\sigma_{\max}+\sigma_{\max,m+1})/2$
	with $\sigma_{\max}$ and $\sigma_{\max,m+1}$ being the first and $(m+1)$th
	largest singular values of $A$, then 	
	\begin{equation}\label{converg6a}
		\frac{\|r_{\mathrm{in},j}\|}{\|r_1\|}\leq 2\Bigg(1-
		\frac{2}{1+\sqrt{(\tau+\sigma_{\max,m+1}+\delta_m)/
				(\tau-\sigma_{\max,m+1}-\delta_m)}}\Bigg)^{j}.
	\end{equation}
	{\rm (\romannumeral2)} If $\sigma_{\min}\leq\tau<(\sigma_{\min}+\sigma_{\min,m+1})/2$
	with $\sigma_{\min}$ and $\sigma_{\min,m+1}$ being the first and $(m+1)$th
	smallest singular values of $A$, and writing $j_o=\min\{1,M-N\}$, then
	\begin{equation}\label{converg7a}
		\frac{\|r_{\mathrm{in},j}\|}{\|r_1\|}\leq
		2\left(\frac{\sigma_{\max}+\delta_m}{\tau}\right)^{j_o}
		\Bigg(1-\frac{2}{1+\sqrt{\frac{(\sigma_{\max}+\delta_m)^2-\tau^2}
				{(\sigma_{\min,m+1}-\delta_m)^2-\tau^2}}}\Bigg)
		^{\left\lbrack\frac{j-j_o}{2}\right\rbrack}.
	\end{equation}
	{\rm (\romannumeral3)} If $(\sigma_{\min}+\sigma_{\min,m+1})/2
	<\tau<(\sigma_{\max}+\sigma_{\max,m+1})/2$, then
	\begin{equation}\label{converg8a}
		\frac{\|r_{\mathrm{in},j}\|}{\|r_1\|}\leq 2\left(1
		-\frac{2}{1+(\sigma_{\max}+\tau+\delta_m)/(|\sigma_{m+1}-\tau|-\delta_m)}\right)
		^{\left\lbrack\frac{j}{2}\right\rbrack}
	\end{equation}
with $\sigma_{m+1}$ being the $(m+1)$th closest singular value to $\tau$ in \eqref{svalues}.
\end{theorem}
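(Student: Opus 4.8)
The plan is to mirror the proof of \cref{thm5} almost verbatim, now drawing on \cref{thm7,thm8} in place of \cref{thm3,thm4}. First I would observe that, since the matrix $Z$ in \eqref{ZZm} is orthogonal, $Z^TB_m^{\prime}Z$ has exactly the eigenvalues $\lambda_i$ of $B_m^{\prime}$ listed in \eqref{eigBm}, namely $\pm\sigma_{m+1}-\tau,\dots,\pm\sigma_N-\tau$ together with the $(M-N)$-fold value $-\tau$. Labeling the eigenvalues $\lambda_i$ of $B_m^{\prime}$ and $\tilde\lambda_i$ of $\widetilde B_m^{\prime}$ both in ascending (or descending) order and combining the Weyl-type perturbation bound (cf.\ Theorem~10.3.1 of \cite{parlett1998symmetric}) with \eqref{diffBBm} and the standing assumption \eqref{delta2}, I would get $|\tilde\lambda_i-\lambda_i|\le\delta$ for every $i$; equivalently, each eigenvalue of $\widetilde B_m^{\prime}$ lies within distance $\delta$ of one of the $\lambda_i$ above, while the isolated value $-\tau$ stays put, since $\widetilde P_m^TA\widetilde Q_m$ has at least $M-N$ zero singular values and hence $-\tau$ remains an exact $(M-N)$-fold eigenvalue of $\widetilde B_m^{\prime}$.

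For case (i), $\tau>(\sigma_{\max}+\sigma_{\max,m+1})/2$, the $\lambda_i$ are all negative and lie in $[-(\sigma_{\max,m+1}+\tau),-(\tau-\sigma_{\max,m+1})]$ as shown in the proof of \cref{thm7}, so the $\tilde\lambda_i$ lie in $[-(\sigma_{\max,m+1}+\tau)-\delta,-(\tau-\sigma_{\max,m+1})+\delta]$; since \eqref{delta2} reads exactly $\delta<\tau-\sigma_{\max,m+1}$, this is a genuine interval strictly to the left of the origin, and taking $\alpha=\tau-\sigma_{\max,m+1}-\delta$, $\beta=\tau+\sigma_{\max,m+1}+\delta$ in \eqref{convergesimplea} of \cref{thm2} yields \eqref{converg6a}. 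Case (iii) is handled identically: the $\tilde\lambda_i$ lie in $[-(\sigma_{\max}+\tau)-\delta,-|\sigma_{m+1}-\tau|+\delta]\cup[|\sigma_{m+1}-\tau|-\delta,\sigma_{\max}+\tau+\delta]$, where the second interval is enlarged if necessary so the two pieces share the common length $\sigma_{\max}+\tau-|\sigma_{m+1}-\tau|+2\delta$, and \eqref{convergesimple} applied to this union produces \eqref{converg8a}.

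For case (ii), $\sigma_1=\sigma_{\min}$, I would follow the argument that gave \eqref{converg1} and \eqref{converg4a}. Here the $\tilde\lambda_i$ lie in $\{-\tau\}\cup[-(\sigma_{\max}+\tau)-\delta,-(\sigma_{\min,m+1}+\tau)+\delta]\cup[\sigma_{\min,m+1}-\tau-\delta,\sigma_{\max}-\tau+\delta]$, with \eqref{delta2} guaranteeing $\sigma_{\min,m+1}-\tau>\delta$ and the nontrivial negative and positive pieces again of equal length $\sigma_{\max}-\sigma_{\min,m+1}+2\delta$. To annihilate the isolated $(M-N)$-fold eigenvalue $-\tau$ I would choose a polynomial $q(\lambda)=\bigl(\tfrac{\tau+\lambda}{\tau}\bigr)^{j_o}\pi(\lambda)$ with $j_o=\min\{1,M-N\}$ and $\pi\in\Pi_{j-j_o}$, note $q(0)=1$ and $q(-\tau)=0$ when $M>N$, bound $\bigl|\tfrac{\tau+\lambda}{\tau}\bigr|^{j_o}\le\bigl(\tfrac{\sigma_{\max}+\delta}{\tau}\bigr)^{j_o}$ over the whole perturbed spectrum, and then apply \eqref{convergesimple} to the resulting min--max problem on the two equal-length intervals, which gives \eqref{converg7a}.

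As in \cref{thm5}, the only place needing care is the bookkeeping in case (ii): one must verify that the $-\tau$ eigenvalue is genuinely unmoved by the perturbation, that the factor $\bigl(\tfrac{\sigma_{\max}+\delta}{\tau}\bigr)^{j_o}$ really majorizes $\bigl|\tfrac{\tau+\lambda}{\tau}\bigr|^{j_o}$ across the perturbed spectrum (the worst case occurring near the eigenvalues $\pm\sigma_{\max}-\tau$), and that the enlarged negative and positive intervals have precisely equal length so that \cref{thm2}(ii) applies with $\beta_1-\alpha_1=\beta_2-\alpha_2$. Everything else just amounts to shifting outward by $\pm\delta$ the interval endpoints found in the proof of \cref{thm7}.
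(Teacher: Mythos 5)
Your proposal is correct and follows essentially the same route the paper intends: the paper omits the proof of \cref{thm9}, stating only that it is analogous to that of \cref{thm5} via \cref{thm7,thm8}, and your argument is precisely that analogue, with the interval endpoints from the proof of \cref{thm7} shifted outward by $\delta$ and \cref{thm2} applied as before. Your extra care in case (ii) — checking that $-\tau$ remains an exact $(M-N)$-fold eigenvalue because $\widetilde P_m^TA\widetilde Q_m$ has at least $M-N$ zero singular values, and that the two perturbed intervals have equal length $\sigma_{\max}-\sigma_{\min,m+1}+2\delta$ — matches the bookkeeping done in the proofs of \cref{thm3,thm5}.
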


\begin{proof}
	By \eqref{diffeigs}, it is known from \eqref{diffBBm} and \eqref{delta2} that the eigenvalues
	$\tilde\lambda_i$  and $\lambda_i$ of $\widetilde B_m^{\prime}$ in \eqref{Bm} and
	$B_m^{\prime}$ in \eqref{BBmme}   satisfy
	\begin{equation}\label{diffeigss}
		\qquad\qquad|\tilde\lambda_i-\lambda_i|\leq\delta_m,\qquad i=1,\dots,M+N-2m.
	\end{equation}
	
	(\romannumeral 1) In this case,
	from \eqref{svalues22} the $m$ singular values $\sigma_1$,
	$\dots$, $\sigma_{m}$  closest to $\tau$ are the $m$ largest ones
	$\sigma_{\max}$, $\sigma_{\max,2}$, $\dots$, $\sigma_{\max,m}$
	of $A$, and the remaining ones
	$\sigma_{m+1},\ldots, \sigma_N$ are smaller than $\tau$, among
	which $\sigma_{m+1}=\sigma_{\max,m+1}$.
	Therefore, the eigenvalues $\lambda_i$ of $B_m^{\prime}$ in
	\eqref{eigBm} are negative and lie in the interval
	\begin{equation*}
		[-(\sigma_{\max,m+1}+\tau),-(\tau-\sigma_{\max,m+1})].
	\end{equation*}
Then bound \eqref{diffeigss}
shows that the eigenvalues $\tilde\lambda_i$ of
$\widetilde B_m^{\prime}$ lie in
\begin{equation*}
	[-(\sigma_{\max,m+1}+\tau)-\delta_m,-(\tau-\sigma_{\max,m+1})+\delta_m].
\end{equation*}
Condition \eqref{delta2} indicates that $\tau-\sigma_{\max,m+1}>\delta_m$.
Then \eqref{converg6a} follows by taking $\alpha=\tau-\sigma_{\max,m+1}-\delta$ and
$\beta=\tau+\sigma_{\max,m+1}+\delta$ in \eqref{convergesimplea}.

(\romannumeral 2) In this case, the $m$ singular values $\sigma_1$,
	$\dots$, $\sigma_{m}$  closest to $\tau$ are the $m$ smallest ones
	$\sigma_{\min}\leq \sigma_{\min,2}\leq\cdots\leq \sigma_{\min,m}$ of $A$, and
	$\sigma_{m+1}=\sigma_{\min,m+1}$.
	The eigenvalues $\lambda_i$ except the $(M-N)$-multiple
	one $-\tau$ of $B_m^{\prime}$ in \eqref{eigBm}
	lie in the interval union
	\begin{equation*}
		 [-(\sigma_{\max}+\tau),
		-(\sigma_{\min, m+1}+\tau)]\cup[\sigma_{\min,m+1}-\tau,
		\sigma_{\max}-\tau].
	\end{equation*}
	 By definition \eqref{Bm}, $\widetilde B_m$ also
	has an $(M-N)$-multiple eigenvalue(s) $-\tau$, which disappears when $M=N$.
	Combining these with bound \eqref{diffeigss}, we know that the other $2(N-m)$
	eigenvalues $\tilde \lambda_i$ of $\widetilde B_m$
	lie in the union
	\begin{equation}\label{case2a}
	\Delta=	[-(\sigma_{\max}+\tau)-\delta_m,
		-(\sigma_{\min,m+1}+\tau)+\delta_m]\cup
		[\sigma_{\min,m+1}-\tau-\delta_m,\sigma_{\max}-\tau+\delta_m],
	\end{equation}
where $\sigma_{\min,m+1}-\tau = \sigma_{m+1}-\tau >\delta_m$ by \eqref{delta2}.

	 Construct a degree $j$ polynomial $q\in \Pi_{j}$ of form
	 \begin{equation*}
	 	q(\lambda)=\left(\tfrac{\tau+\lambda}{\tau}\right)^{j_o}\pi(\lambda)
	 	\qquad\mbox{with}\qquad
	 	\pi \in \Pi_{j-j_o},
	 \end{equation*}
	 where $j_o=\min\{1,M-N\}$. Then $q(0)=\pi(0)=1$, $q(-\tau)=0$ if $j_0=1$,
	 and $\left|\frac{\tau+\lambda}{\tau}\right|^{j_o}\leq
	 \left(\frac{\sigma_{\max}+\delta_m}{\tau}\right)^{j_o}$
	 over the eigenvalues $\tilde \lambda_i$ of $\widetilde B_m$; see \eqref{case2a}.
	 Therefore, by \cref{thm6}
	 and relations \eqref{initial}--\eqref{resminres}, we obtain
	 \begin{equation*}
	 	\frac{\|r_{\mathrm{in},j}\|}{\|r\|}\leq \min_{\pi\in \Pi_{j-j_o}}
	 	\max_{i=1,\dots,M+N-2m}|q(\tilde \lambda_i)|
	 	\leq \left(\frac{\sigma_{\max}+\delta_m}{\tau}\right)^{j_o}
	 	\min_{\pi\in  \Pi_{j-j_o}}
	 	\max_{\tilde\lambda_i\in \Delta}|\pi(\tilde \lambda_i)|,
	 \end{equation*}
	 Notice that the positive and negative intervals in \eqref{case2a} have the same
	 lengths $\sigma_{\max}-\sigma_{\min,m+1}+2\delta_m$.
	 Bound \eqref{converg7a} follows from applying \eqref{convergesimple} to the
	 above min-max problem.
	
	(\romannumeral 3) In this case, notice that the eigenvalues $\lambda_i$
	of $B_m^{\prime}$ in \eqref{eigBm} are in
	\begin{equation*}
		[-(\sigma_{\max}+\tau),-|\sigma_{m+1}-\tau|]\cup
		[|\sigma_{m+1}-\tau|,\sigma_{\max}+\tau].
	\end{equation*}
Then bound \eqref{diffeigs} means that the eigenvalues
$\tilde\lambda_i$ of $\widetilde B_m^{\prime}$ lie in
$$%\begin{eqnarray*}
	[-(\sigma_{\max}+\tau)-\delta_m,
	-|\sigma_{m+1}-\tau|+\delta_m]
	\cup[|\sigma_{m+1}-\tau|-\delta_m,\sigma_{\max}+\tau+\delta_m]
%	\subset\  [-(\sigma_{\max}+\tau)-\delta_m,
%	-|\sigma_{m+1}-\tau|+\delta_m]
%	&\cup&[|\sigma_{m+1}-\tau|-\delta_m,\sigma_{\max}+\tau+\delta_m],\label{alpha}
$$%\end{eqnarray*}
with $|\sigma_{m+1}-\tau|-\delta_m>0$.
Since the two intervals above have the equal length $\sigma_{\max}+\tau-|\sigma_{m+1}-\tau|
+2\delta_m$,  applying \eqref{convergesimple} to MINRES for \eqref{precorrection}
with this interval union yields \eqref{converg8a}.
\end{proof}

\begin{remark}\label{rem1}
For the desired
%\textcolor{blue}{$m$ largest, smallest and interior singular values,}
$\sigma_1=\sigma_{\max},\sigma_{\min}$ and $\sigma_1\not=\sigma_{\max},\sigma_{\min}$,
the convergence rates of MINRES for the new correction equation \eqref{precorrection}
depend on
\begin{equation}\label{gamma123} \footnotesize{
		\gamma_{\mathrm{l},m}=\frac{\tau-\sigma_{\max,m+1}-\delta_m}{\tau+\sigma_{\max,m+1}+\delta_m}, \quad
		\gamma_{\mathrm{s},m}=\frac{(\sigma_{\min,m+1}-\delta_m)^2-\tau^2}{(\sigma_{\max}+\delta_m)^2-\tau^2}, \quad
		\gamma_{\mathrm{i},m}=\frac{|\sigma_{m+1}-\tau|-\delta_m}{\sigma_{\max}+\tau+\delta_m},}
	\end{equation}
respectively.
	The better $\sigma_{\max,m+1}$, $\sigma_{\min,m+1}$ or
	$\sigma_{m+1}$ is separated from $\tau$,
	 the smaller the convergence factors $1-\frac{2\sqrt{\gamma_{\mathrm{l},m}}}{1+\sqrt{\gamma_{\mathrm{l},m}}}$,
	$1-\frac{2\sqrt{\gamma_{\mathrm{s},m}}}{1+\sqrt{\gamma_{\mathrm{s},m}}}$ or
	$1-\frac{2\gamma_{\mathrm{i},m}}{1+\gamma_{\mathrm{i},m}}$
	in \eqref{converg6a}--\eqref{converg8a} are, so that MINRES converges faster for solving \eqref{precorrection}; in the meantime, the more accurate  $\mathcal{R}(\widetilde U_m)$
	and $\mathcal{R}(\widetilde V_m)$ approximate $\mathcal{R}(U_m)$ and $\mathcal{R}(V_m)$,
	i.e., the smaller $\delta_m$ defined by \eqref{delta2} is, the faster MINRES converges
for solving \eqref{precorrection}.
\end{remark}

%\begin{remark}\label{rem2}
%	Suppose that $\delta_m$ is sufficiently small.
%For $\tau>\frac{\sigma_{\max}+\sigma_{\max,m+1}}{2}$ or
% $\tau<\frac{\sigma_{\min}+\sigma_{\min,m+1}}{2}$, which, by \eqref{svalues22},
% corresponds to $\sigma_1=\sigma_{\max}$ or $\sigma_1=\sigma_{\min}$, we have
%	\begin{equation*}
%		\gamma_{\mathrm{l},m}\approx\frac{\sigma_{\max}-\sigma_{\max,m+1}}
%		{\sigma_{\max}+\sigma_{\max,m+1}}
%		\qquad \mbox{and} \qquad
%		\gamma_{\mathrm{s},m}\approx\frac{\sigma_{\min,m+1}^2-\sigma_{\min}^2}
%		{\sigma_{\max}^2-\sigma_{\min}^2}.
%	\end{equation*}
%	Suppose that $\sigma_{\max,m+1}\approx \sigma_{\max}$ and
%	$\sigma_{\min,m+1}\approx \sigma_{\min}$ roughly.
%	Then
%	\begin{equation*}
%		\frac{\gamma_{\mathrm{l},m}}{\gamma_{\mathrm{s},m}}
%		\approx\frac{\sigma_{\max}(\sigma_{\max}-\sigma_{\max,m+1})}{
%			4\sigma_{\min}(\sigma_{\min,m+1}-\sigma_{\min})}\gg 1
%	\end{equation*}
%	if $\sigma_{\max}-\sigma_{\max,m+1}\approx \sigma_{\min,m+1}-\sigma_{\min}$ and
%	$\sigma_{\max}\gg\sigma_{\min}$.
%, and the ratio is even
%	bigger
%	if $\sigma_{\max}-\sigma_{\max,m+1}>\sigma_{\min,m+1}-\sigma_{\min}$
%	considerably, a typical case in applications.
%	As a result, MINRES is much faster when computing the
%	largest singular triplet than the smallest one.
%\end{remark}

\begin{remark}
	As mentioned at the beginning of this section, by taking $m=1$,
\eqref{gamma123} indicates that the convergence
	rates of MINRES for the standard correction equation \eqref{correction}
depend on
	\begin{equation}\label{gamma123s}
		\gamma_{\mathrm{l},1}=\frac{\tau-\sigma_{\max,2}-\delta_1}{\tau+\sigma_{\max,2}+\delta_1}, \quad
		\gamma_{\mathrm{s},1}=\frac{(\sigma_{\min,2}-\delta_1)^2-\tau^2}{(\sigma_{\max}+\delta_1)^2-\tau^2}, \quad
		\gamma_{\mathrm{i},1}=\frac{|\sigma_{2}-\tau|-\delta_1}{\sigma_{\max}+\tau+\delta_1},
	\end{equation}
respectively.
As is seen, if $\sigma_1\approx \tau$ and $\sigma_2$
is close to $\sigma_1$,
then MINRES may converge
very slowly for the standard correction equation \eqref{correction}.
Suppose that $\delta_1\approx \delta_m$ is sufficiently small,
i.e., $(\theta_i,\tilde u_i,\tilde v_i)$, $i=2,\dots,m$
are as accurate as $(\theta_i,\tilde u_1,\tilde v_1)$.
Under assumption \eqref{svalues22}, from \eqref{gamma123}--\eqref{gamma123s} we obtain
    $$
    \gamma_{\mathrm{l},1}\ll \gamma_{\mathrm{l},m},\qquad
    \gamma_{\mathrm{s},1}\ll \gamma_{\mathrm{s},m}, \qquad
    \gamma_{\mathrm{i},1}\ll \gamma_{\mathrm{i},m},
    $$
    meaning that  MINRES for the new correction equation \eqref{precorrection} will
    converge much faster than it does for the standard \eqref{correction} correspondingly.
As a result, if the desired $\sigma_1$ is close to $\sigma_2$ but well separated from $\sigma_{m+1}$,
the inner iterations of JDSVD-V can be much more efficient than those of the standard JDSVD.
\end{remark}

\subsection{Setup approach of the new correction equations in computations} \label{subsec3}

The previous results and analysis have shown that
it is much better to use MINRES to solve the new correction equation
\eqref{precorrection} rather than the standard \eqref{correction} when there are totally $m$
clustered singular values of $A$ closest to $\tau$ and their
approximate singular triplets have some accuracy.
%However, the number $m$ is practically {\em unknown}.
Equally important is that the solutions of \eqref{precorrection} and \eqref{correction}
expand the subspaces equally effectively, meaning that the outer iterations of JDSVD-V
mimic those of JDSVD. In computations,
we need to set up necessary criteria to define $m$ mathematically
and determine it numerically, adaptively
form \eqref{precorrection}, and propose and develop a practical JDSVD-V algorithm.

Suppose that there are $m$ singular values $\sigma_i$ clustered at $\tau$ in
the sense of
\begin{equation*}
	|\sigma_i-\tau|
	\leq {\max\{\sigma_i,1\}} \cdot \tilde\varepsilon_1
\end{equation*}
for a reasonably small $\tilde\varepsilon_1$.
 \cref{thm8,thm9} indicate that if the singular vectors
with the $m$ clustered singular values are computed with some accuracy
\begin{equation*}
	\|\sin\Phi\|+\|\sin\Psi\|\leq\tilde\varepsilon_2
\end{equation*}
for a reasonably
small $\tilde\varepsilon_2$ then we benefit very much from MINRES for
solving \eqref{precorrection}.

At each outer iteration, for the current Ritz triplets
$(\theta_i,\tilde u_i,\tilde v_i)$, $i=2,\dots,k$,
if $\theta_i$ and the residual
$r_i=r(\theta_i,\tilde u_i,\tilde v_i)$ in
\eqref{residual} satisfy
\begin{equation}\label{close}
	|\theta_i-\tau|\leq \max\{\theta_i,1\}\cdot
	\tilde\varepsilon_1
	\qquad\mbox{and}\qquad
	\|r_i\|\leq \|A\|_{\mathrm{e}}\cdot \tilde\varepsilon_2,
\end{equation}
then we claim $\theta_i$ clustered at $\tau$ and accept
$(\theta_i,\tilde u_i,\tilde v_i)$ as a reasonably good approximate
singular triplet of $A$, where $\|A\|_{\mathrm{e}}=\sqrt{\|A\|_1\cdot\|A\|_{\infty}}$
with $\|A\|_1$ and $\|A\|_{\infty}$ the $1$- and $\infty$-norms of $A$, respectively.
Suppose that there are $\tilde m-1$ such clustered $\theta_i$,
and write them as $\theta_i,i=2,\ldots,\widetilde m$ for ease of notation.
Then we form
\begin{equation}\label{selected}
	\widetilde U_{\widetilde m}=[\tilde u_1,\tilde u_{2},\dots,\tilde u_{\widetilde m}]
	\qquad\mbox{and}\qquad
	\widetilde V_{\widetilde m}=[\tilde v_1,\tilde v_{2},\dots,\tilde v_{\widetilde m}],	
\end{equation}
%where the indices $i_2,\dots,i_{\widetilde m}\in\{2,\dots,k\}$ are such that \eqref{close} is fulfilled,
and replace $\widetilde U_m$ and $\widetilde V_m$ by them in \eqref{precorrection}.

\begin{remark}\label{remark8}
%An appropriate value for $\tilde\varepsilon_2$ is crucial.
%For $\tilde\varepsilon_2$ not reasonably small, the Ritz vectors
%$\tilde u_i$ and $\tilde v_i$, $i=i_2,\dots,i_{\widetilde m}$
%in \eqref{precorrection} may be much less accurate compared
%to $\tilde u_1$ and $\tilde v_1$, causing that
%$\|\sin\Phi_m\|\gg \|\sin\Phi_1\|$ and $\|\sin\Psi_m\|\gg \|\sin\Psi_1\|$
%in \eqref{canonical}, and $\delta_m\gg\delta_1$ in \eqref{delta2}.
%In this case, bounds \eqref{converg6a}--\eqref{converg8a} may not be smaller than
%the bounds for MINRES solving the standard \eqref{correction} with $m=1$.
%Furthermore, the solution of \eqref{precorrection} may not be as effective as
%that of the standard \eqref{correction} when expanding
%the searching subspaces (cf. the discussions on \eqref{precorrection}),
%thereby delaying the outer convergence of JDSVD-V.
%As a result, JDSVD-V may use much more inner iterations than the standard JDSVD.
%In order to make the outer convergence of JDSVD-V well mimic
%that of the standard JDSVD, an ideal value of $\tilde\varepsilon_2$
%should be comparable to $\|r_1\|/\|A\|_\mathrm{e}$,
%meaning that $\tilde u_i$ and $\tilde v_i$,
%$i=2,\dots,\widetilde m$ in \eqref{precorrection} always
%have comparable accuracy with $\tilde u_1$
%and $\tilde v_1$.
%However, if so, as $(\theta_1,\tilde u_1,\tilde v_1)$
%converges, $\tilde\varepsilon_2$ decreases to
%$\varepsilon_{\mathrm{out}}$ in \eqref{convergence}, causing that
%fewer pairs $\tilde u_i$ and $\tilde v_i$ enter into \eqref{precorrection},
%$\tilde m=1$ gradually and JDSVD-V resembles the standard JDSVD.
%Fortunately,
\cref{thm9} indicates that provided that
$\delta_m<|\sigma_{m+1}-\tau|$ considerably, bounds \eqref{converg6a}--\eqref{converg8a}
are almost equal to the ideal ones with $\delta_m=0$.
%, which corresponds to
%the case that the Ritz vectors $\tilde u_i=u_i,\tilde v_i=v_i,i=2,\ldots,m$.
In computations, it suffices to take a small fixed tolerance
$\tilde\varepsilon_2>\varepsilon_{\out}$ considerably. For instance, given an outer stopping
tolerance $\varepsilon_{\out}\in [10^{-14},10^{-8}]$, we have numerically found that
a good practical choice is
$\tilde\varepsilon_2 \in [10^{-3},10^{-2}]$.
% to ensure the effect of
%inner preconditioning, and as a compromise, permit slightly
%delayed outer convergence of the JDSVD-V.
\end{remark}

\begin{remark}\label{remark9}
Given $\tilde\varepsilon_2$, an appropriate value for
$\tilde\varepsilon_1$ is needed.
The larger $\tilde\varepsilon_1$ is, the larger $\tilde m$ is
in \eqref{selected}, and generally the fewer
inner iterations are required to solve \eqref{precorrection}.
On the other hand, as $\widetilde m$ increases, at each step of MINRES,
the matrix-vector product with the coefficient matrix
$\widetilde B_{\widetilde m}$ in \eqref{precorrection} costs more
when $A$ is sparse. We empirically
suggest to take $\tilde\varepsilon_1\in [0.05,0.01]$.
Once $\tilde\varepsilon_1$ is given, $m$ is determined, and
we will have $\tilde m=m$ ultimately as the method converges.
%As a result, as $\widetilde m>m$ in \eqref{svalues22} increases,
%as long as
%$|\sigma_{\widetilde m+1}-\tau|\approx|\sigma_{m+1}-\tau|$ in magnitude,
%MINRES converges the same fast for \eqref{precorrection} in terms of inner iterations
%yet consumes more runtime.
%Therefore, to be efficient, we take a sufficiently small $\tilde\varepsilon_1$
%so that $\widetilde m\approx m$ gradually.
\end{remark}

\begin{remark}\label{remark10}
Two extreme cases are $\tilde\varepsilon_1=\tilde\varepsilon_2=0$ and
$\tilde\varepsilon_1=\tilde\varepsilon_2=+\infty$. In the first case,
$\widetilde m\equiv1$, and JDSVD-V degenerates to the standard JDSVD.
In the second case,  $\widetilde m=k$, and all $\tilde u_i$
and $\tilde v_i$, $i=2,\dots,k$ enter into the
new correction equation \eqref{precorrection}, and
the resulting JDSVD-V solves the correction equation \eqref{correctionV},
a specific instance of the JD method proposed
in \cite{genseberger1999alternative} for the eigenvalue problem, denoted by JDSVD-V($+\infty$) in
the sequel. Such JD method had been argued to work poorly \cite{deSturler2002ImprovingTC,genseberger1999alternative}
and will be further confirmed numerically in this paper. As has been clear from
the current context, the fundamental
reason is that the {\em whole} $\mathcal{R}(\tilde{U}_k)$ and $\mathcal{R}(\tilde{V}_k)$
are generally {\em not} reasonably good approximate left and right singular subspaces of $A$,
so that the corresponding term $r_{\rm tail}$ overwhelms $r_1$ in \eqref{correction4} and
the solution of the corresponding correction equation \eqref{precorrection} is far from
that of \eqref{correction}. As a consequence, such solution is ineffective to expand
the subspaces, and the corresponding JDSVD-V($+\infty$) method consumes more outer iterations to
converge than the JDSVD and JDSVD-V methods with the correction equation \eqref{correction}
and \eqref{precorrection}, respectively, do. Our reasoning applies to that JD method in \cite{genseberger1999alternative} for the eigenvalue problem,
%As it turns out, the optimal values for $\tilde\varepsilon_1$ and
%$\tilde\varepsilon_2$ are problem dependent.
%We empirically suggest $\tilde\varepsilon_1=0.05$ and $\tilde\varepsilon_2=0.01$.
\end{remark}

\begin{remark}
For $k$ small, there may be no sufficiently
good approximate singular triplet that satisfies
\eqref{close}. In this case, $\widetilde m=1$ in \eqref{selected},
and \eqref{precorrection} degenerates to \eqref{correction}.
However, for $m>1$, as the dimension $k$ increases,
more and more Ritz approximations start to converge and
fulfill \eqref{close}, so that $\widetilde m>1$
pairs of Ritz vectors participate in \eqref{precorrection}.
Then JDSVD-V starts to exhibit its superiority of overall efficiency to JDSVD.
\end{remark}

\section{A thick-restart JDSVD-V algorithm with deflation and purgation}\label{sec:5}
For practical purpose, we must limit the subspace dimension $k\leq k_{\max}$.
If JDSVD and JDSVD-V do not yet converge until $k=k_{\max}$, we must restart them.
Furthermore, if we are required to compute $\ell>1$ singular triplets of $A$ corresponding to
the $\ell$ singular values $\sigma_1,\ldots,\sigma_{\ell}$ closest to
$\tau$ (cf. \eqref{svalues}), an appropriate deflation
is necessary. Meanwhile, it will turn out that a certain purgation is crucial and
may greatly reduce the outer iterations
for computing the second to the $\ell$th singular triplets.

\subsection{A new thick-restart} \label{subsec4}
If JDSVD-V does not yet converge for computing the desired
$\ell$ singular triplets for a given $k_{\max}$, we modify a commonly used thick restart,
initially proposed in \cite{stath1998} and later popularized in the JDSVD
and JDGSVD type methods \cite{huang2019inner,huang2022harmonic,huang2023cross,
	wu2015preconditioned}, and
propose a more effective thick-restart
JDSVD-V than the standard thick-restart JDSVD. Our thick-restart retains
some $k_{\new}$-dimensional restarting subspaces with
\begin{equation}\label{knew}
	k_{\new}=\max\{k_{\min},\widetilde m\}	
\end{equation}
where $k_{\min}> 1$ is a user-prescribed dimension of restarting
subspaces in the standard thick-restart JDSVD
and $\widetilde m$ is the number of Ritz approximations
that participate in the correction equation~\eqref{precorrection} in
the current expansion phase.

Specifically, assume that $\widetilde m$ Ritz approximations
$(\theta_i,\tilde u_i,\tilde v_i),i=1,2,\dots,\widetilde m$ of $A$
participate in \eqref{precorrection} before restart.
If $\widetilde m \leq k_{\min}$, we simply use the standard thick-restart scheme,
generate the orthonormal basis matrices of the new $k_{\min}$-dimensional subspaces
$\UU$ and $\VV$ by
\begin{equation*}
	\widetilde U=[\tilde u_1,\tilde u_2,\dots,\tilde u_{k_{\min}}],
	\qquad
	\widetilde V=[\tilde v_1,\tilde v_2,\dots,\tilde v_{k_{\min}}],
\end{equation*}
and updates the projection matrix of $A$ with respect to these two subspaces by
\begin{equation*}
	H=\widetilde{U}^TA\widetilde{V}=\diag\{\theta_1,\theta_{2},\dots,\theta_{k_{\min}}\}.
\end{equation*}
If $\widetilde m> k_{\min}$, we form the orthonormal
basis matrices of the new $\widetilde m$-dimensional
$\UU$ and $\VV$ and the corresponding projection matrix of $A$ by
\begin{equation*}
	\widetilde U=\widetilde U_{\widetilde m},
	\qquad
	\widetilde V=\widetilde V_{\widetilde m},
	\qquad
	H=\widetilde{U}_{\widetilde m}^TA\widetilde{V}_{\widetilde m}=
	\diag\{\theta_1,\theta_2,\dots,\theta_{\widetilde m}\}
\end{equation*}
with $\widetilde U_{\widetilde m}$ and $\widetilde V_{\widetilde m}$ defined by \eqref{selected}.
In this way,
the $k_{\rm new}$-dimensional restarting subspaces contain approximate left and
right singular vectors associated with all available approximate singular values clustered at
$\tau$. Consequently,
whenever $m<k_{\max}$ considerably, the new thick-restart JDSVD-V is expected to
substantially
outperform the standard thick-restart JDSVD in terms of the outer iterations.

\subsection{Deflation and purgation}\label{subsec5}
Suppose that the $\ell(>1)$ singular triplets of $A$ are of interest with
the $\ell$ singular values $\sigma_i$ closest to the target $\tau$.
Then JDSVD-V equipped with the deflation in \cite{huang2019inner}
meets this demand.
Specifically, assume that $j<\ell$ approximate singular triplets
$\left(\theta_{i,\mathrm{c}},u_{i,\mathrm{c}}, v_{i,\mathrm{c}}\right)$
have already converged to $(\sigma_i,u_i,v_i)$,
$i=1,\dots,j$ with the residual $r_{i,\mathrm{c}}
=r(\theta_{i,\mathrm{c}},u_{i,\mathrm{c}}, v_{i,\mathrm{c}})$ defined
by \eqref{residual} satisfying
\begin{equation*}
	\|r_{i,\mathrm{c}}\|\leq \|A\|_{\mathrm{e}}\cdot \varepsilon_{\mathrm{out}},\qquad i=1,\dots,j,
\end{equation*}
where $\|A\|_{\rm e}$ is defined as in \eqref{close}.
Denote
\begin{equation*}
	\Sigma_{\mathrm{c}}=\mathrm{diag}\{\theta_{1,\mathrm{c}},\dots,
	\theta_{j,\mathrm{c}}\},\qquad
	U_{\mathrm{c}}=[u_{1,\mathrm{c}},\dots,u_{j,\mathrm{c}}],
	\qquad
	V_{\mathrm{c}}=[v_{1,\mathrm{c}},\dots,v_{j,\mathrm{c}}].
\end{equation*}
Then $\left(\Sigma_{\mathrm{c}},U_{\mathrm{c}}, V_{\mathrm{c}}\right)$ is a converged
approximation to the partial SVD $(\Sigma_j,U_j,V_j)$ of $A$ in \eqref{blocksvd}.
Suppose that the next singular triplet $(\sigma_{j+1},u_{j+1},v_{j+1})$ of $A$
is desired. At the extraction phase of current context,
the deflation in \cite{huang2019inner} implements
JDSVD-V with respect to the $k$-dimensional $\UU$ and $\VV$
that {\em are orthogonal to} $\RR(U_{\mathrm{c}})$ and $\RR(V_{\mathrm{c}})$,
respectively. We obtain approximate
singular triplets $(\theta_i,\tilde u_i,\tilde v_i)$, $i=1,\dots,k$ as
described in \cref{sec:2}, and take $(\theta_1,\tilde u_1,\tilde v_1)$
as an approximation to $(\sigma_{j+1},u_{j+1},v_{j+1})$.

If $(\theta_1,\tilde u_1,\tilde v_1)$ does not converge, we take
$(\theta_i,\tilde u_i,\tilde v_i)$, $i=1,2,\dots,\widetilde m$
that meet \eqref{close} to form $\widetilde U_{\widetilde m}$ and $\widetilde V_{\widetilde m}$
in \eqref{selected}.
Then we set $U_{\mathrm{p}}=[U_{\mathrm{c}}, \widetilde U_{\widetilde m}]$ and
$V_{\mathrm{p}}=[ V_{\mathrm{c}}, \widetilde V_{\widetilde m}]$, and
use MINRES to approximately solve the new correction equation
\begin{equation}\label{deflatcorrection}
	\begin{bmatrix}	I-U_{\mathrm{p}}U_{\mathrm{p}}^T\!\! &\\&\!\!I- V_{\mathrm{p}}V_{\mathrm{p}}^T \end{bmatrix}
	\begin{bmatrix} -\tau I&A\\A^T&-\tau I \end{bmatrix}
	\begin{bmatrix}	I-U_{\mathrm{p}}U_{\mathrm{p}}^T\!\!&\\&\!\!I-V_{\mathrm{p}}  V_{\mathrm{p}}^T \end{bmatrix}
	\begin{bmatrix} s\\ t\end{bmatrix}=r_{\mathrm{p}}
\end{equation}
for $(s,t)\perp\perp(U_{\mathrm{p}},V_{\mathrm{p}})$,
where
$r_{\mathrm{p}}= -\begin{bmatrix}	I-U_{\mathrm{c}}U_{\mathrm{c}}^T\!\!\!
	&\\&\!\!\! I- V_{\mathrm{c}}V_{\mathrm{c}}^T \end{bmatrix}r_1$
with
$r_1=r(\theta_1,\tilde u_1,\tilde v_1)$ being
the residual of $(\theta_1,\tilde u_1,\tilde v_1)$ defined by \eqref{residual}.
We compute an approximate solution $(\tilde s,\tilde t)\perp\perp(U_{\mathrm{p}},V_{\mathrm{p}})$
and terminate MINRES when the residual $r_{\mathrm{in}}$ in \eqref{innerresidual} satisfies
\begin{equation}\label{innerconvergence}
	\|r_{\mathrm{in}}\|\leq  \|r\| \cdot  \min\{{\omega \varepsilon_{\mathrm{in}}},0.1\},
\end{equation}
where $\varepsilon_{\mathrm{in}}\in[10^{-4},10^{-3}]$ is a user-prescribed
tolerance and $\omega$ is a constant
that is estimated by using all the approximate singular values
computed at the current cycle; see \cite{huang2019inner,jia2014inner}
for theoretical details on $\varepsilon_{\mathrm{in}}$.
We use the approximate solution $\tilde s$ and $\tilde t$ to expand
$\UU$ and $\VV$. Since $(\widetilde U,\widetilde
V)\perp\perp(U_{\mathrm{c}},V_{\mathrm{c}})$ and
$(\tilde s,\tilde t)\perp\perp(U_{\mathrm{c}},V_{\mathrm{c}})$, the expanded
$\UU$ and $\VV$ are automatically orthogonal to $\RR(U_{\mathrm{c}})$
and $\RR(V_{\mathrm{c}})$, respectively.
The thick-restart JDSVD-V proceeds until $(\theta_1,\tilde u_1,\tilde v_1)$ converges.
We then assign $\left(\theta_{j+1,\mathrm{c}},u_{j+1,\mathrm{c}},
v_{j+1,\mathrm{c}}\right):=(\theta_1,\tilde u_1,\tilde v_1)$, add it to the
converged $\left(\Sigma_{\mathrm{c}},  U_{\mathrm{c}},V_{\mathrm{c}}\right)$,
and set $j=j+1$. Proceed in such a way until all the $\ell$ singular triplets are found.

%In the deflated JDSVD-V method described above, we have $1\leq\widetilde m\leq m-j$ for $j<m$
%since we have already computed $j$ converged singular triplets and the deflated
%matrix has only $m-j$ $m-j$ singular values clustered at $\tau$.
%For $j\geq m$, the new correction equations degenerate to
%the standard ones, and JDSVD-V thus becomes JDSVD.

Regarding the purgation issue, notice that the current $k$-dimensional left and right
subspaces $\UU$ and $\VV$ contain rich information on the next
desired $(u_{j+1},v_{j+1})$ when we have computed $(u_j,v_j)$.
Therefore, when computing $(\sigma_{j+1},u_{j+1},v_{j+1})$,
one can find a better initial approximation
to $(u_{j+1},v_{j+1})$ by fully exploiting the current $\UU$ and $\VV$ rather than the ones
generated randomly. The purgation approach \cite{huang2019inner}
purges or purifies the converged $\tilde u_j,\tilde v_j$ from the current
subspaces $\UU$ and $\VV$ with little cost, and uses the purged
$(k-1)$-dimensional as the initial restarting subspaces. One can
benefit much from using such purged lower-dimensional subspaces and save outer iterations
considerably when computing
$(\sigma_i,u_i,v_i),\ i=2,\ldots,\ell$, compared to the computation of
the first desired $(\sigma_1,u_1,v_1)$. The approach  in \cite{huang2019inner}
is directly applicable to the thick-restart JDSVD-V, and we thus omit the details.

\begin{algorithm}[htbp]
	\caption{Thick-restart JDSVD-V
		with deflation and purgation.}
	\begin{algorithmic}[1]\label{algo1}
		\STATE{\textbf{Initialization:}\  Given the target $\tau$, set $k=1$, $k_{\mathrm{c}}=0$,
			$\Sigma_{\mathrm{c}}=[\ ], U_{\mathrm{c}}=[\ ]$, and $V_{\mathrm{c}}=[\ ]$.
			Let $\widetilde U=[\  ]$, $\widetilde V=[\ ]$, $H=[\ ]$,
			and choose normalized
			starting vectors $u_{+}=\frac{u_0}{\|u_0\|}$ and $v_{+}=\frac{v_0}{\|v_0\|}$.\hspace{-2em}}
		\STATE{Calculate $\|A\|_e=\sqrt{\|A\|_1\cdot\|A\|_{\infty}}$.}
		
		\WHILE{$k\geq0$}
		
		\STATE{\label{step:3} Set $\widetilde U=[\widetilde U, u_{+}]$ and $\widetilde V=[\widetilde V, v_{+}]$, and update
			$H=\widetilde U^TA\widetilde V$.}
		
		\STATE{\label{step4} Compute the singular triplets $(\theta_i,c_i,d_i),i=1,\dots,k$ of $H$ with $\theta_i$  ordered increasingly by their distances from $\tau$. Form the approximate singular triplets $(\theta_i,\tilde u_i\!=\widetilde Uc_i,\tilde v_i\!=\widetilde Vd_i)$ and the residuals $r_i\!=\!\bsmallmatrix{A\tilde v_i-\theta_i \tilde u_i\\ A^T\tilde u_i-\theta_i \tilde v_i}$, $i\!=\!1,\dots,k$.}
		
		\IF{$\|r_1\|\leq\|A\|_{\mathrm{e}}\cdot \varepsilon_{\mathrm{out}}$}
		\STATE{Update $\Sigma_{\mathrm{c}}=\diag\{\Sigma_{\mathrm{c}},\theta_1\},
			U_{\mathrm{c}}=[U_{\mathrm{c}},\tilde u_1]$, $V_{\mathrm{c}}=[V_{\mathrm{c}},\tilde v_1]$,
			and set $k_c=k_c+1$.}
		
		\STATE{\textbf{if} $k_{\mathrm{c}}=\ell$ \textbf{then} return
			$(\Sigma_{\mathrm{c}},U_{\mathrm{c}},V_{\mathrm{c}})$ and stop. \textbf{fi}}
		
		\STATE{\label{step5} Perform the purgation approach  \cite{huang2019inner},
set $k=k-1$, and go to step~\ref{step4}.}
		\ENDIF
		
		\STATE{Determine the $\widetilde m-1$ Ritz approximations that satisfy \eqref{close},
			and form $\widetilde U_{\widetilde m}=[\tilde u_1,\tilde u_2,\dots,\tilde u_{\widetilde m}]$
			and $\widetilde V_{\widetilde m}=[\tilde v_1,\tilde v_2,\dots,\tilde u_{\widetilde m}]$.}
		
		\STATE{Set $U_{\mathrm{p}}=[U_{\mathrm{c}},\widetilde U_{\widetilde m}]$,
			$V_{\mathrm{p}}=[V_{\mathrm{c}},\widetilde V_{\widetilde m}]$ and
			$r_{\mathrm{p}}=-\bsmallmatrix{I-U_{\mathrm{c}}U_{\mathrm{c}}^T&\\&I- V_{\mathrm{c}}V_{\mathrm{c}}^T}r_1$, and solve
			$$\begin{bmatrix}	I-U_{\mathrm{p}}U_{\mathrm{p}}^T &\\&I- V_{\mathrm{p}}V_{\mathrm{p}}^T \end{bmatrix}
			\begin{bmatrix} -\tau I&A\\A^T&-\tau I \end{bmatrix}
			\begin{bmatrix}	I-U_{\mathrm{p}}U_{\mathrm{p}}^T&\\&I-V_{\mathrm{p}}  V_{\mathrm{p}}^T \end{bmatrix}
			\begin{bmatrix} s\\ t\end{bmatrix} = r_{\mathrm{p}}$$ 	
			for the approximate solution $(\tilde s,\tilde t) \perp\perp(U_{\mathrm{p}},V_{\mathrm{p}})$
			with the residual norm $\|r_{\mathrm{in}}\|$ satisfying the stopping criterion \eqref{innerconvergence}.
		}
		
		\STATE{\label{step:12}\textbf{if} $k=k_{\max}$ \textbf{then} perform the thick-restart and reset $k=k_{\new}$ in \eqref{knew}. \textbf{fi}}
		
		\STATE{\label{step:13} Orthonormalize $\tilde s$ and $\tilde t$ against $\widetilde U$ and $\widetilde V$
			to obtain the expansion vectors $u_{+}$ and $ v_{+}$, respectively, and set $k=k+1$.}
		\ENDWHILE
	\end{algorithmic}
\end{algorithm}

\subsection{Thick-restart JDSVD-V algorithm with deflation and purgation} \label{subsec6}
 \cref{algo1} sketches our thick-restart JDSVD-V
with deflation and purgation.
It requires %the devices to form the matrix-vector products with $A$ and $A^T$,
the target $\tau$, the number $\ell$ of desired singular
triplets, and the outer stopping tolerance $\varepsilon_{\mathrm{out}}$, and
outputs a converged partial SVD $(\Sigma_c,U_c,V_c)$ that satisfies
\begin{equation*}
	\sqrt{\|AV_c-U_c\Sigma_c\|_{\mathrm{F}}^2+\|A^TU_c-V_c\Sigma_c\|_{\mathrm{F}}^2}\leq
	\sqrt{\ell}\|A\|_{\mathrm{e}}\cdot \varepsilon_{\mathrm{out}},
\end{equation*}
where $\|\cdot\|_{\rm F}$ is the F-norm of a matrix.
Other optional parameters are starting vectors
$u_0\in\mathbb{R}^{M}$ and $v_0\in\mathbb{R}^{N}$,
the maximum and minimum dimensions $k_{\max}$ and $k_{\min}$ of the searching subspaces,
the accuracy requirement $\varepsilon_{\mathrm{in}}$ for expansion vectors in \eqref{innerconvergence},
and the parameters $\tilde\varepsilon_1$ and $\tilde\varepsilon_2$ in \eqref{close}.
By defaults, we set $\varepsilon_{\mathrm{out}}=10^{-12}$, $u_0={\sf randn}(M,1)$,
$v_0={\sf randn}(N,1)$,
$k_{\max}=30$, $k_{\min}=3$,
$\varepsilon_{\mathrm{in}}=10^{-3}$,
$\tilde\varepsilon_1=0.05$ and $\tilde\varepsilon_2=0.01$, where {\sf randn} is the
{\sc Matlab} built-in function
that generates normally distributed random matrices or vectors.

%When going back to step~\ref{step4} from step \ref{step5},
%the SVD of the updated projection matrix $H$
%is known explicitly, so are the Ritz approximations and
%corresponding residuals. As a matter of fact, the orthonormal bases of $\UU$ and
%$\VV$ after purgation and the corresponding projection matrix of $A$ are
%\begin{equation*}
%	\widetilde U=[\tilde u_2,\dots,\tilde u_k],\qquad
%	\widetilde V=[\tilde v_2,\dots,\tilde v_k],\qquad
%	H=\mathrm{diag}\{\theta_{2},\dots,
%	\theta_{k}\}.
%\end{equation*}
%Therefore, JDSVD-V only needs to reorder all the
%approximate singular triplets, reassign $(\theta_1,\tilde u_1,\tilde v_1):=
%(\theta_2,\tilde u_2,\tilde v_2)$, and test its convergence.

\section{Numerical experiments}\label{sec:6}
We report numerical experiments to illustrate the considerable
superiority of JDSVD-V to JDSVD.
We also make a comparison of JDSVD-V and PRIMME\_SVDS \cite{wu16primme}. % \label{subsec8}
To this end, we have also developed a hybrid two-stage JDSVD-V algorithm, called
JDSVD-V\_HYBRID,
in {\sc Matlab}, which is based on the same motivation and is along the same line as that of
the state-of-the-art two-stage software package PRIMME\_SVDS in C language
developed in \cite{wu16primme}
for accurately computing several extreme singular triplets.
In the first stage of JDSVD-V\_HYBRID we
perform a similar JDSVD-V method on the eigenproblem
of $A^TA$ or that of $AA^T$ if $m<n$ with
the same outer stopping tolerance as that of PRIMME\_SVDS,
where $A$ has full column or row  rank; a convergence analysis
of JDSVD-V on such eigenproblem can be adaptable
from the current context, and we omit it due to space.
If further accuracy is required,
JDSVD-V\_HYBRID then switches to the second stage and implements the JDSVD-V method proposed in
this paper using the approximate singular vectors obtained in the first stage
as initial guesses.
We will illustrate that JDSVD-V\_HYBRID is at least competitive with
and can outperform PRIMME\_SVDS considerably in terms of overall efficiency
and reliability, especially when computing the smallest singular triplets.

All the experiments were implemented on a 12th Gen Intel(R)
Core(TM) i7-12700K CPU 3.60GHz with 32 GB main memory and
12 cores using the {\sc Matlab} R2023b under the Ubuntu 22.04.4
LTS 64 bit system with the machine precision $\epsilon=2.22\times 10^{-16}$.

\begin{table}[tbhp]
	\caption{Properties of the test matrices from the
		SuiteSparse Matrix Collection \cite{davis2011university}: part I.}\label{table00}
	\begin{center}
		\begin{tabular}{cccccc} \toprule
			$A$&$M$&$N$&$\textit{nnz}$&$\sigma_{\max}$&$\sigma_{\min}$  \\ \midrule
			e40r0100 &17281 &17281&553562&13.1&8.68e-8 \\
			inlet &11730 &11730&328323&6.57&1.69e-6 \\
			Alemdar &6245 &6245 &42581 &69.5 &3.30e-3 \\
			bcsstk33 &8738 &8738 &591904 &75.8 &2.18e-95 \\
			cat\_ears\_3\_4 &5226 &13271 &39592 &5.49 &3.06e-1 \\
			epb2 &25228 &25228 &175027 &3.34 &1.28e-3 \\			
			relat8 & 345688 & 12347 &1334038& 18.8 &0 \\
			dw8192 &8192 &8192 &41746 &110 &2.88e-5 \\
			garon2 &13535 &13535 &373235 &44.9 &7.32e-7 \\
			tomographic1 &73159 &59498 &647495 &6.98 &0 \\
			\bottomrule
		\end{tabular}
	\end{center}
\end{table}

\cref{table00} lists some of the test matrices from the
SuiteSparse Matrix Collection \cite{davis2011university} together
with some of their basic properties, where $\textit{nnz}$ denotes
the total number of nonzero entries in $A$, and the largest and smallest
singular values $\sigma_{\max}$ and $\sigma_{\min}$ of $A$ are,
only for experimental purpose, computed by the {\sc Matlab} functions
{\sf svds} and {\sf svd} for the very large matrices ``relat8'',
``tomographic1'' and all the other moderately large ones, respectively.
To illustrate the wide applicability of our algorithm,
we took $\tau$ at will, so that they correspond to either extreme or interior
singular triplets.
For each matrix with the given target $\tau$,
we took ten different pairs of random starting vectors
$u_0={\sf randn}(M,1)$ and $v_0={\sf randn}(N,1)$, and
ran each algorithm ten times with all the other default
parameters as described in \cref{subsec6} unless specified otherwise.
For \eqref{correction}, \eqref{precorrection} and \eqref{deflatcorrection},
we took zero vector as the initial guesses and used the {\sc Matlab} function {\sf minres}
to solve them until the relative residual norms met \eqref{innerconvergence}.
We report the total number of outer iterations ``Iter'',
the matrix-vector products ``MVs'' with $A$ and $A^T$, and
the CPU time (``Time'') that each algorithm
had averagely consumed over ten runs.

\begin{exper}
	By taking $\tilde\varepsilon_1=\tilde\varepsilon_2=0$ and
	$\tilde\varepsilon_1=0.05$, $\tilde\varepsilon_2=0.01$ and
	$\tilde\varepsilon_1=\tilde\varepsilon_2=+\infty$,
	we use the standard JDSVD, JDSVD-V and JDSVD-V($+\infty$)
	(cf. \cref{remark10}) algorithms to compute the ten singular
	triplets of $A=$ ``e40r0100'' and ``inlet'' for the targets
	$\tau = 13,0.1$ and $5.3,0.3$, respectively.
\end{exper}

\begin{figure}[tbhp]
	\centering
	\includegraphics[width=0.48\textwidth]{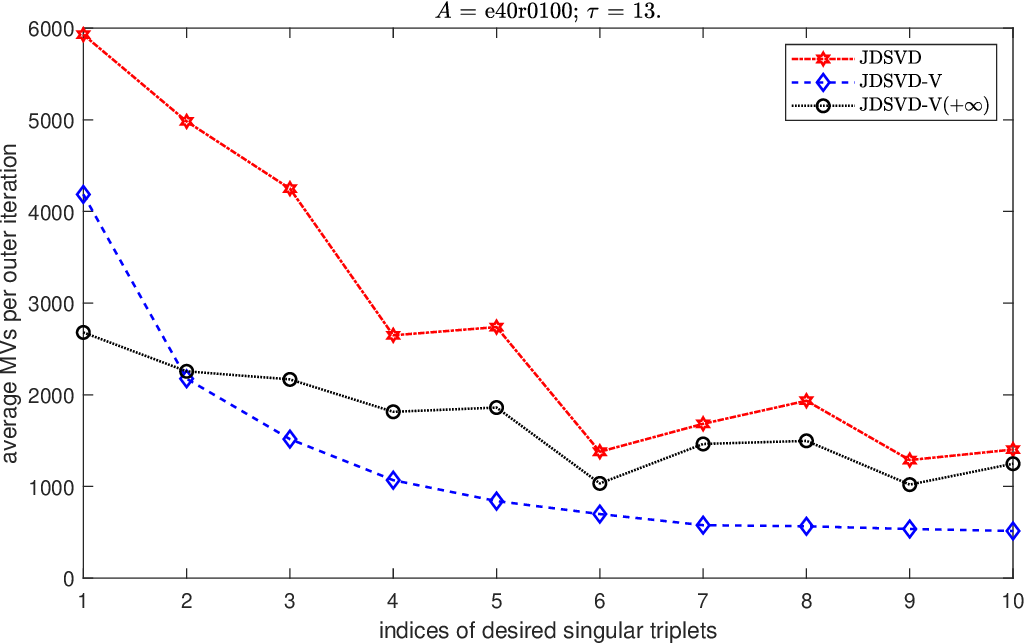}\hfill
	\includegraphics[width=0.47\textwidth]{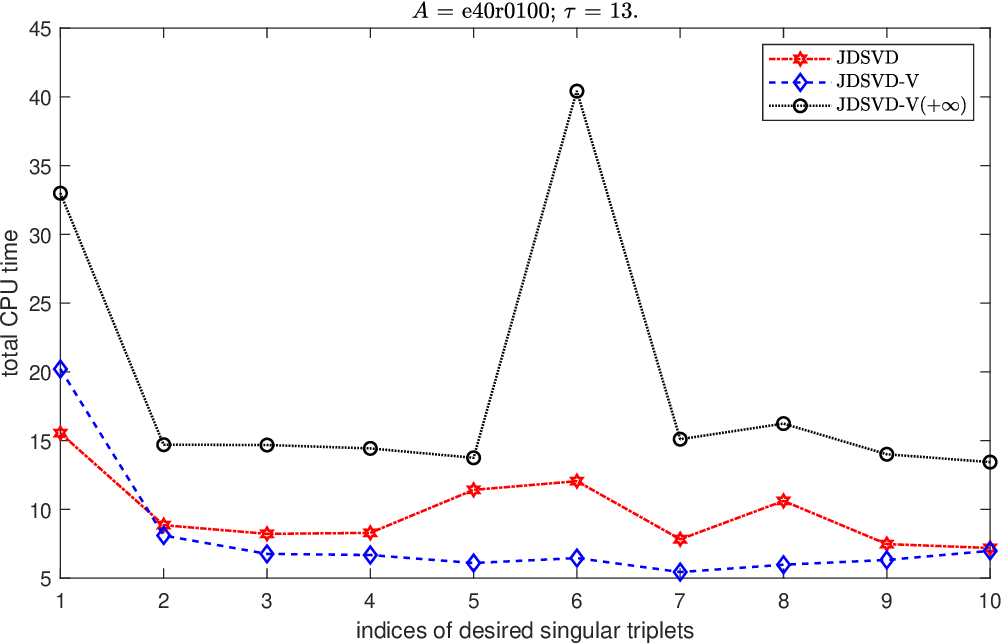}\\[0.2em]
	
	\includegraphics[width=0.48\textwidth]{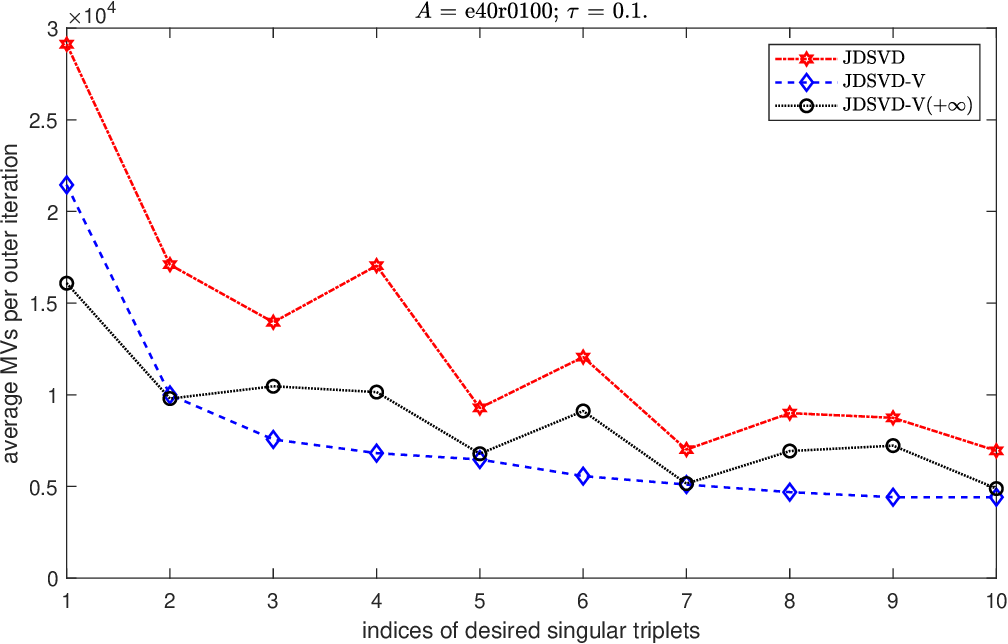}\hfill
	\includegraphics[width=0.48\textwidth]{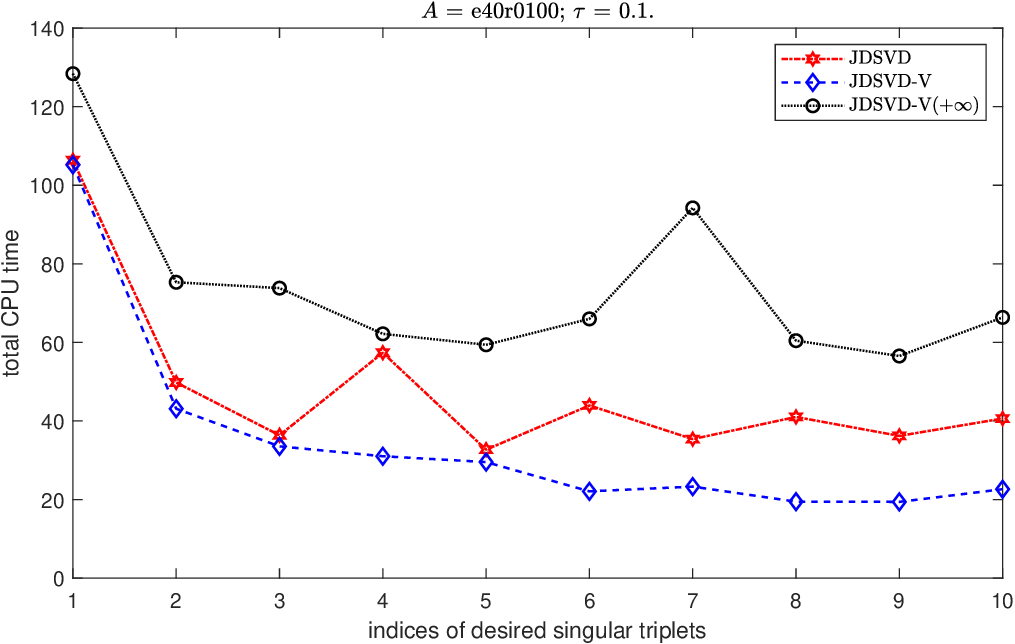}\\[0.2em]
	
	\includegraphics[width=0.48\textwidth]{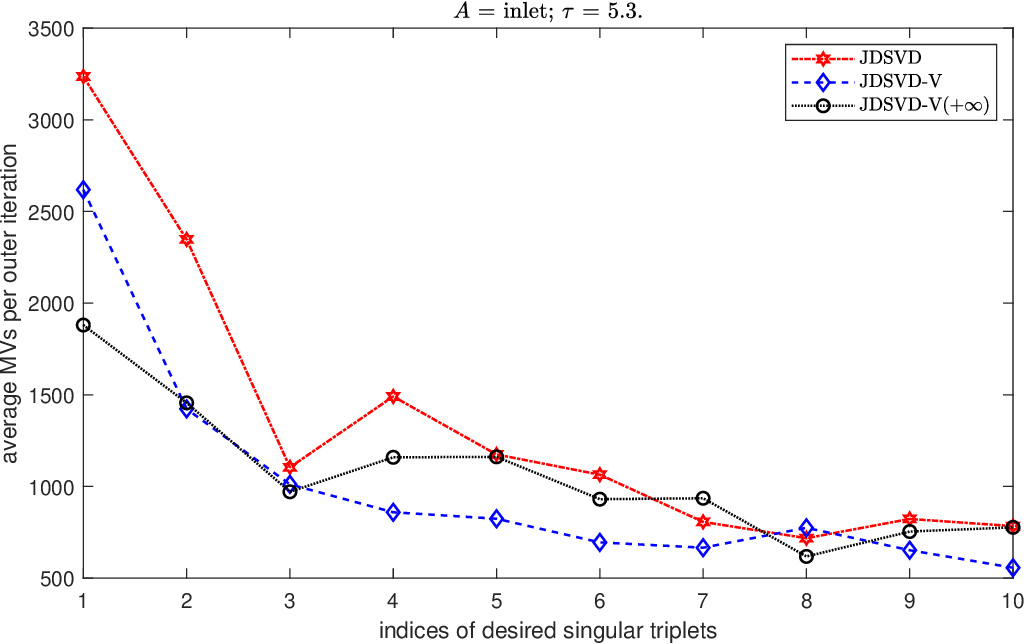}\hfill
	\includegraphics[width=0.465\textwidth]{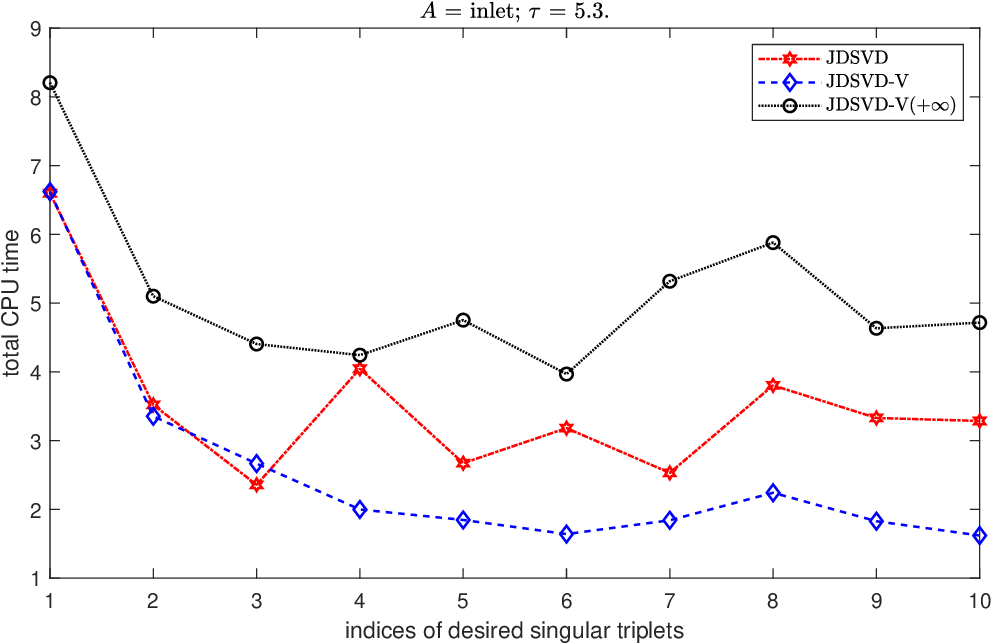}\\[0.2em]
	
	\includegraphics[width=0.48\textwidth]{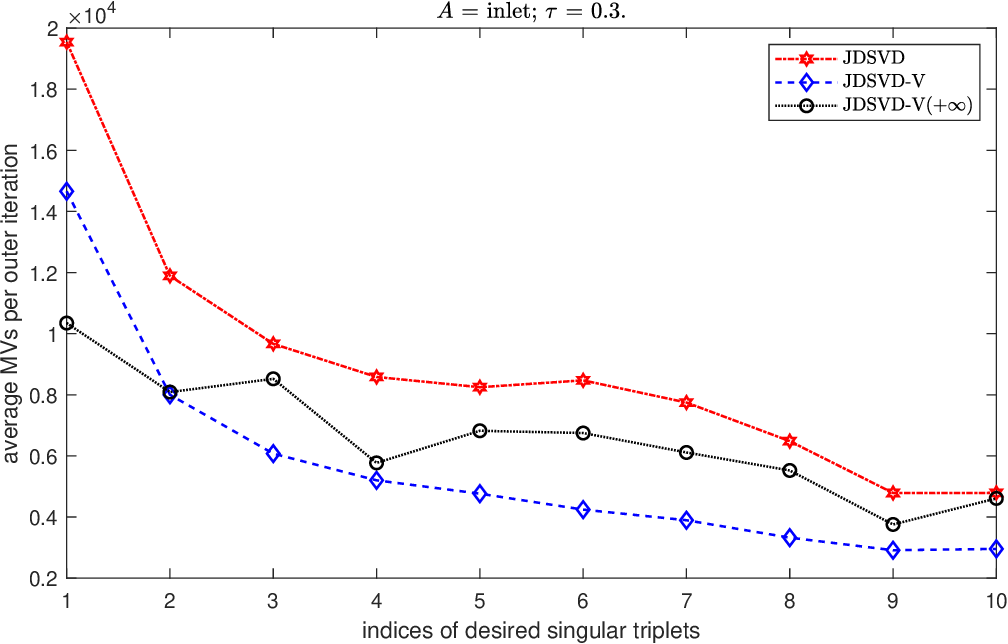}\hfill
	\includegraphics[width=0.48\textwidth]{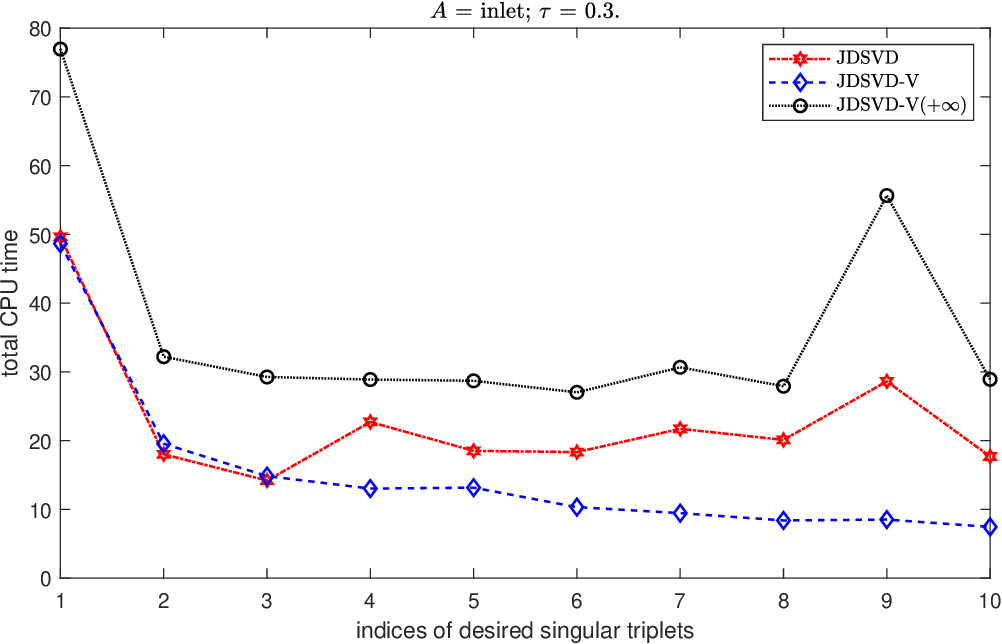}
	
	\caption{Computing the ten singular triplets of  ``e40r0100'' and ``inlet''.}\label{fig0}
\end{figure}

\begin{table}[tbhp] \small
	\caption{Results on computing the ten singular triplets of ``e40r0100'' and ``inlet'' for given targets.}\label{table0}
	\begin{center}
		\begin{tabular}{ccccccccccc}
			\toprule
			\multirow{2}{*}{$A$}&\multirow{2}{*}{$\tau$}
			&\multicolumn{3}{c}{JDSVD}
			&\multicolumn{3}{c}{JDSVD-V}
			&\multicolumn{3}{c}{JDSVD-V($+\infty$)}\\
			\cmidrule(lr){3-5} \cmidrule(lr){6-8} \cmidrule(lr){9-11}
			&&Iter &MVs &Time
			 &Iter &MVs &Time
			 &Iter &MVs &Time \\\midrule
			
			\!\!\multirow{2}{*}{e40r0100}\!\! &13
			      &145 &329687 &97.5
				  &\bf{136} &\bf{177798} &\bf{79.0}
				  &380 &580682 &190 \\
			
			 &0.1 &138 &\!\!1704453\!\! &480
			      &\bf{136} &\!\!\bf{1106373}\!\! &\bf{349}
			      &304 &\!\!2448150\!\! &743 \\[0.2em]
			
			\multirow{2}{*}{inlet}  &5.3
			      &163 &191942 &35.3
			      &\bf{126} &\bf{128975} &\bf{25.7}
			      &264 &263554 &51.2 \\
			
			 &0.3 &152 &\!\!1252510\!\! &230
			      &\bf{129} &\bf{778922} &\bf{153}
			      &307 &\!\!1904944\!\! &366 \\
			
			\bottomrule
		\end{tabular}
	\end{center}
\end{table}

For $\tau=13$ and $5.3$, the desired singular values are the
largest ones of ``e40r0100'' and ``inlet'', and those for
$\tau=0.1$ and $0.3$ are interior ones that are highly clustered
with some other singular values.
Table~\ref{table0} displays the results of the three
JDSVD type algorithms, where bold numbers mean optimal, and Figure~\ref{fig0} (left) depicts the average
MVs per outer iteration and Figure~\ref{fig0} (right) the total CPU time that they computed
each desired singular triplet.
% of (top four) ``e40r0100'' for
%$\tau=13$ and $0.1$ and Figure~\ref{fig0} (bottom) ``inlet'' for $\tau=5.3$ and $0.3$,
%respectively.

As we can see from Figure~\ref{fig0}, for each problem,
JDSVD-V used fewer and even much fewer MVs than
JDSVD to solve the standard correction equation \eqref{correction} and \eqref{deflatcorrection}
with $\tilde m\equiv 1$, confirming the
superiority of MINRES for solving the new correction equations \eqref{precorrection}
and \eqref{deflatcorrection}. Regarding
the overall efficiency, JDSVD-V used less time to compute each desired singular triplet
of each problem.
JDSVD-V reduced $32.82\%\sim46.11\%$ of total MVs and $18.94\%\sim33.19\%$ of the
total CPU time used by JDSVD, as is seen from \cref{table0}.
Moreover, we observe from \cref{table0} that for each problem,
JDSVD-V used fewer outer iterations than JDSVD,
confirming the higher effectiveness of the new thick-restart strategy.
%Clearly, for these four problems, JDSVD-V outperformed JDSVD
%considerably.

%Now let us have a closer look.
We observe from \cref{fig0} that for
most of the desired singular triplets of each problem, JDSVD-V($+\infty$) used
averagely fewer and even much fewer MVs and CPU time than JDSVD per outer iteration, but
it consumed more and often considerably more MVs and CPU time than JDSVD-V in total.
Moreover, JDSVD-V($+\infty$) consumed much more outer iterations than JDSVD and
JDSVD-V did, which confirms the analysis in \cref{remark10} because some of the
Ritz approximations
have poor accuracy but participate in forming the new correction equations so that
the subspaces are expanded ineffectively. As a result, the overall efficiency
of JDSVD-V($+\infty$) is considerably inferior to JDSVD in terms of the total CPU time and MVs.

Clearly, with the chosen $\tilde\varepsilon_1$ and $\tilde\varepsilon_2$,
JDSVD-V benefits much from the
new correction equations \eqref{precorrection} and \eqref{deflatcorrection}
than JDSVD for solving the standard ones.
%Besides, as we see from \cref{fig0} (right),
%JDSVD-V($+\infty$) consumed much more CPU time than
%JDSVD to compute each singular triplet for each problem, because
%it used considerably more outer iterations
%than the latter when computing each singular triplet of each problem.
%As is observed from \cref{table0}, JDSVD-V($+\infty$) consumed
%$37.27\%\sim76.06\%$ more MVs, $62.52\%\sim 161.61\%$ more outer
%iterations and $45.01\%\sim94.68\%$ longer CPU time
%than JDSVD to compute all the desired singular triplets of
%the four problems.

Finally, as we see from \cref{table0}, for each algorithm and the computation of
interior singular triplets of each matrix, while the numbers
of outer iterations are comparable to
that for the computation of the largest singular triplets of the same matrix, the inner iterations
are much more because MINRES converges much more slowly for the
highly indefinite correction equations than the definite ones, which correspond to
the interior and largest singular triplets, respectively.
This confirms \cref{thm9}.
% and our analysis on it for JDSVD-V and JDSVD. % the relevant analysis has been removed.

%In summary, appropriate values for $\tilde\varepsilon_1$ and
%$\tilde\varepsilon_2$ are indeed vital for the outer convergence and
%overall efficiency of the JDSVD-V algorithm.

\begin{exper}
	We now compute the ten singular triplets of the last eight test matrices in
	\cref{table00} for given targets $\tau$. The
	desired singular values of ``epb2''
are not very interior, and those of the other seven matrices are interior and clustered.
\end{exper}

\begin{figure}[tbhp]
	\centering
	\includegraphics[width=0.48\textwidth]{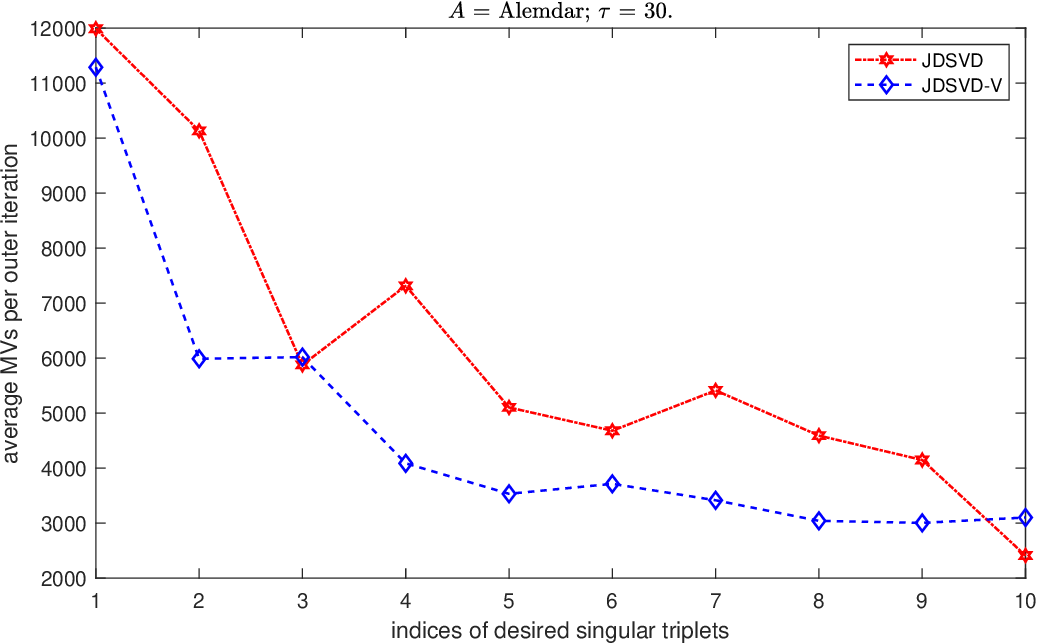}\hfill
	\includegraphics[width=0.47\textwidth]{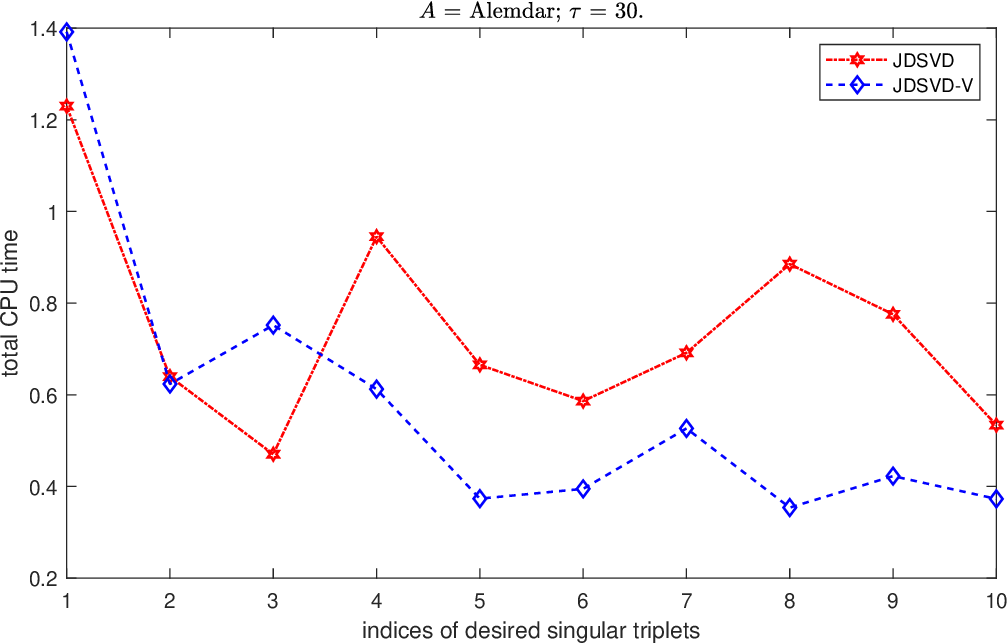}\\[0.2em]
	
	\includegraphics[width=0.48\textwidth]{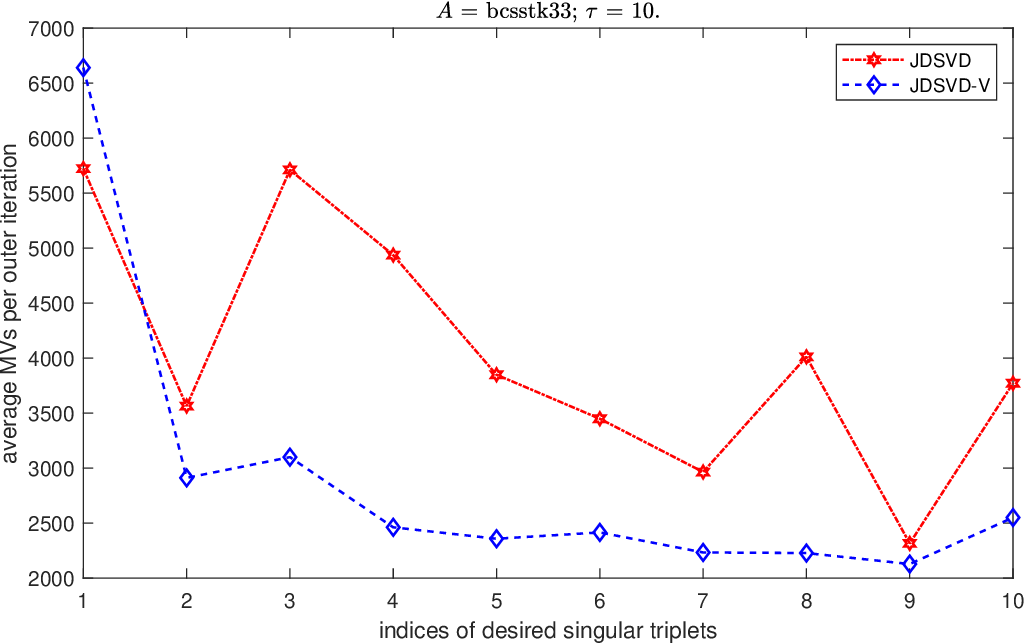}\hfill
	\includegraphics[width=0.47\textwidth]{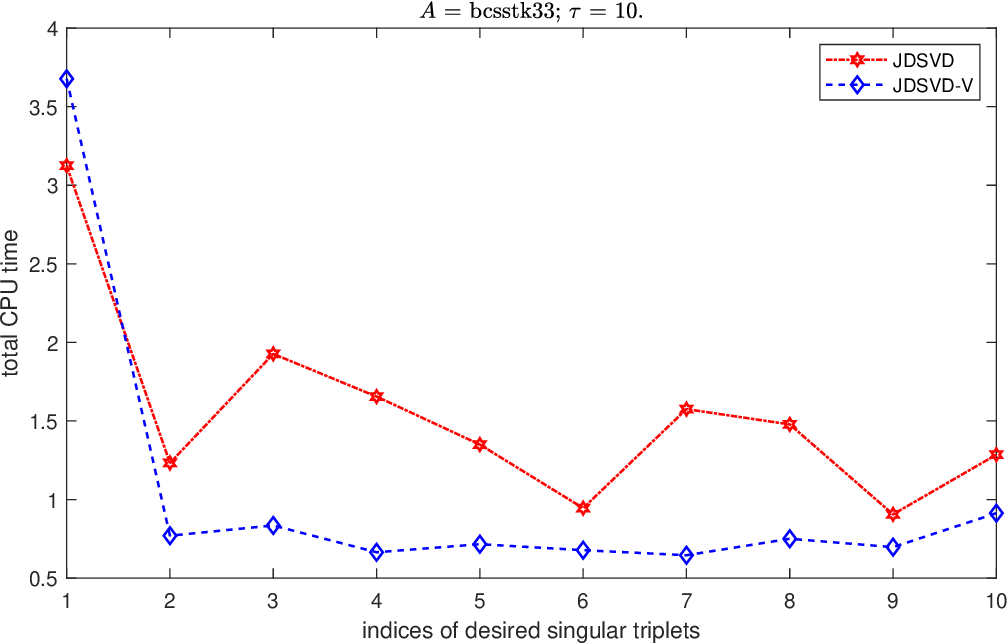}
	\caption{Computing the singular triplets of  $\mathrm{Alemdar}$ (top) and $\mathrm{bcsstk33}$ (bottom).}\label{fig1}
\end{figure}

\vspace{-0.5em}

\begin{table}[tbhp]
	\caption{Results on computing the ten singular triplets with given targets.}\label{table1}
	\begin{center}
		\begin{tabular}{cccccccc}
			\toprule
			\multirow{2}{*}{$A$}&\multirow{2}{*}{$\tau$}
			&\multicolumn{3}{c}{JDSVD}
			&\multicolumn{3}{c}{JDSVD-V}\\
			\cmidrule(lr){3-5} \cmidrule(lr){6-8}
			&&Iter &MVs &Time
			&Iter &MVs &Time \\\midrule
			
			Alemdar  &30 &165 &925964 &76.6 &\bf{135} &\bf{610402} &\bf{54.4}   \\
			cat\_ears\_3\_4 &3 &150 &1620502 &165 &\bf{143} &\bf{942971} &\bf{109}  \\
			epb2 &2.2 &123 &94836 &20.7 &\bf{62}&\bf{48842} &\bf{11.8}  \\		
			relat8 &5 &120&805104 &2.58e+3  &\bf{115} &\bf{503362} &\bf{2.10e+3} \\
			bcsstk33 &10 &159 &672383 &161 &\bf{125} &\bf{415379} &\bf{111}  \\
			dw8192 &5.4 &197 &2649603 &263 &\bf{152}&\bf{1200845} &\bf{140}  \\
			garon2 &19 &\bf{127} &1128861 &223 &135 &\bf{700079} &\bf{165}  \\
			tomographic1 &4.5 &117 &319431 &297 &\bf{105} &\bf{213754} &\bf{255}   \\
			\bottomrule
		\end{tabular}
	\end{center}
\end{table}

JDSVD-V and JDSVD succeeded in computing the desired
singular triplets of all the test matrices with ten
different sets of random starting vectors, and \cref{table1}
lists their average computational results.
Regarding the outer iterations, we see from the table that
JDSVD-V converged faster than JDSVD for almost all the test matrices,
especially for ``epb2'', for which JDSVD-V reduced nearly
half of the outer iterations needed by JDSVD,
confirming the substantial higher effectiveness of the new thick-restart strategy proposed in \cref{subsec6}.
Regarding the overall efficiency, \cref{fig1} (left)  depicts
the average MVs per outer iteration and \cref{fig1} (right) draws the total CPU time used by
JDSVD and JDSVD-V to compute each desired singular triplet of ``Alemdar'' and ``bcsstk33''.
As we can see, JDSVD-V, on average, saved $1088$ and $4209$ MVs
at each outer iteration for these two matrices, respectively,
huge improvements! It saved $23.04\%\sim60.02\%$ of the CPU time used by JDSVD for
computing the last seven and nine desired singular triplets of ``Alemdar'' and ``bcsstk33'', respectively.
For all the test matrices, JDSVD-V
reduced $33.10\%\sim54.68\%$ of MVs and
$14.24\%\sim46.70\%$ of the CPU time by JDSVD.

Obviously, for these matrices, JDSVD-V is substantially superior to JDSVD.

\begin{exper}\label{exper3}
We compare JDSVD-V\_HYBRID with PRIMME\_SVDS and the
implicitly restarted LBD algorithm (IRLBD) \cite{jia2003implicitly} when computing the ten
largest singular triplets of the test matrices ``Alemdar'' in \cref{table00}
and the seven ones from the SuiteSparse Matrix Collection \cite{davis2011university},
whose basic properties are listed in \cref{table00b}.
For PRIMME\_SVDS, we took the method in the first stage as the default
``DYNAMIC'' so that it dynamically switched between JDQMR and the GD+k
method to minimize the time, and in the second stage we
took the recommended ``JDQMR\_ETol'' method and set the
maximum block size as $1$ to make a fair comparison.
For both algorithms, we took the maximum and minimum subspace dimensions
$k_{\max}=30$ and $k_{\min}=3$ and the
same starting vectors $u_0={\sf randn}(M,1)$ and $v_0={\sf randn}(N,1)$.
The other remaining parameters in the two algorithms were set by default.
We set the target $\tau=\|A\|_e$ for JDSVD-V\_HYBRID and Sigma $=$ ``L''
for PRIMME\_SVDS and IRLBD.

\begin{table}[tbhp]
	\caption{Properties of the test matrices: part II.}\label{table00b}
	\begin{center}
		\begin{tabular}{cccccc} \toprule
			$A$&$M$&$N$&$\textit{nnz}$&$\sigma_{\max}$&$\sigma_{\min}$  \\ \midrule
			3elt &4720 &4720 &27444 &6.03 &2.12e-3 \\
			plddb &3069 &5049 &10839 &1.43e+2 &1.16e-2 \\
			bas1lp &5411 &9825 &587775 &1.49e+3 &3.62e-2 \\	
			G66 &9000 &9000 &36000 &3.58 &2.14e-4 \\
			stat96v4 &3173 &63076 &491336 &1.96e+1 &2.07e-3 \\	
			atmosmodl &1489752&1489752&10319760&6.21e+5 &5.42e+2 \\	
			cage15 &5154859&5154859&99199551&1.02 &8.56e-2 \\
			\bottomrule
		\end{tabular}
	\end{center}
\end{table}

Notice that MVs dominates the overall
efficiency of each algorithm. Our JDSVD-V\_HYBRID and IRLBD were written in
{\sc Matlab} language and PRIMME\_SVDS was optimally programmed in C language,
the latter thus should be faster than JDSVD-V\_HYBRID and IRLBD
in terms of CPU time when comparable MVs are used.
\end{exper}

\begin{table}[tbhp]
	\caption{Results on computing the ten largest singular triplets.}\label{table3}
	\begin{center}
		\begin{tabular}{ccccccc}
			\toprule
			\multirow{2}{*}{$A$}&\multicolumn{2}{c}{IRLBD}&\multicolumn{2}{c}{PRIMME\_SVDS} &\multicolumn{2}{c}{JDSVD-V\_HYBRID} \\
			\cmidrule(lr){2-3} \cmidrule(lr){4-5} \cmidrule(lr){6-7}
			 &MVs  &Time &MVs  &Time  &MVs  &Time   \\\midrule
			Alemdar   &\bf{426}&\bf{1.18e-1} &1758&2.08e-1 &1694&2.58e-1   \\
			3elt      &\bf{753}&\bf{1.67e-1}  &2210&1.92e-1&2058&2.11e-1   \\
			plddb     &122820&24.2    &\bf{3608}&\bf{1.86e-1} &4102&3.30e-1 \\ 	
			bas1lp    &\bf{88}&\bf{5.76e-2} &299&1.10e-1 &413&2.14e-1    \\
			G66       &\bf{729}&\bf{1.82e-1} &2297&2.38e-1  &2073&3.13e-1  \\
			stat96v5  &\bf{255}&\bf{1.95e-1} &1127&2.52e-1 &1078&5.07e-1    \\
			atmosmodl  &84778&3.48e+3 &13013&\bf{334} &\bf{10583}&552\\
			cage15    &\bf{3779} &\bf{613} &5555 &796 &4884 &1.03e+3    \\
			\bottomrule
		\end{tabular}	
	\end{center}
\end{table}

\cref{table3} displays the average computational results
over ten runs for the three algorithms, where bold numbers mean optimal.
As we can see, JDSVD-V\_HYBRID
consumed slightly to moderately fewer MVs than PRIMME\_SVDS
for most of the eight test matrices, except for ``plddb'' and ``bas1lp'',
for which the latter used considerably fewer MVs than
the former. IRLBD outperformed JDSVD-V\_HYBRID and PRIMME\_SVDS
for six of the eight matrices by
using significantly fewer MVs and less CPU time.
We notice that some of the desired singular values of
``plddb'' and  ``atmosmodl'' are very clustered, which caused the slow convergence
of IRLBD, and
JDSVD-V\_HYBRID and PRIMME\_SVDS were substantially superior to IRLBD.
JDSVD-V\_HYBRID and PRIMME\_SVDS were competitive in terms of
MVs and CPU time for most of the test matrices; PRIMME\_SVDS outperformed JDSVD-V\_HYBRID for  ``bas1lp'',
and JDSVD-V\_HYBRID outmatched PRIMME\_SVDS for ``atmosmodl'' in terms of MVs.

In summary, for computing several largest singular triplets of
a large scale matrix,
JDSVD-V\_HYBRID is at least competitive with PRIMME\_SVDS; IRLBD is
generally preferable if the largest singular values are {\em not} clustered,
otherwise the other two algorithms may be advantageous.

\begin{exper}
Setting $\tau=0$ for JDSVD-V\_HYBRID and Sigma $=$ ``S'' for PRIMME\_SVDS,
we compute the ten smallest singular triplets of the eight matrices in
Experiment~\ref{exper3} using JDSVD-V\_HYBRID, PRIMME\_SVDS and PRIMME\_SVDS
with the block Jacobi preconditioners (PRIMME\_SVDS\_J) with block size $10$ and $100$
for the first six and last two matrices, respectively.
\end{exper}
	
\begin{table}[tbhp]	
	\caption{Results on computing the ten smallest singular triplets.}\label{table4}
	\begin{center}
		\begin{tabular}{ccccccc}
			\toprule
			\multirow{2}{*}{$A$}&\multicolumn{2}
			{c}{\!\!PRIMME\_SVDS\!\!}&\multicolumn{2}{c}{\!\!PRIMME\_SVDS\_J\!\!}&\multicolumn{2}{c}{JDSVD-V\_HYBRID} \\
			\cmidrule(lr){2-3} \cmidrule(lr){4-5}  \cmidrule(lr){6-7}
			  &MVs  &Time &MVs  &Time  &MVs  &Time   \\\midrule
			\!\!Alemdar\!\!    &368860&25.5  &229956&65.6  &\bf{209840}&\bf{12.5} \\
			
			3elt       &153168&7.87  &2362024&1.52e+3&\bf{130341}&\bf{6.40}   \\
			plddb       &33575&1.42  &\bf{22772}&6.64 &26852&\bf{0.66}  \\
			bas1lp      &443433&96.6  &\!\!\bf{283159}\!\!&150 &344999&\bf{62.7} \\
			G66          &\!\!8325715\!\!&\!\!861\!\!  &\!\!24523528\!\!&7.27e+3 &\bf{266884}&\bf{27.4}\\
			\!\!stat96v5\!\!     &23121&5.88  &587180&1.73e+3 &\bf{20046}&\bf{4.25}\\
			\!\!atmosmodl\!\!  &358703&\bf{5.30e+3} &\!\!\bf{320026}\!\!&\!\!1.94e+4\!\!&504452&\!\!5.65e+3\!\!  \\
			cage15    &5392&\bf{737}  &\bf{2867}&\!\!3.44e+3\!\!  &4969&776  \\
			\bottomrule
		\end{tabular}
	\end{center}
\end{table}

\cref{table4} reports the average performance of the three algorithms over
ten runs.
As we see from the table, except ``atmosmodl'' for which JDSVD-V\_HYBRID
used $40.63\%$ more MVs than PRIMME\_SVDS, for the other seven matrices,
JDSVD-V\_HYBRID performed
considerably better than PRIMME\_SVDS in terms of the total MVs and/or CPU time;
on average, JDSVD-V\_HYBRID saved $15.34\%\sim75.78\%$ of the MVs used by
PRIMME\_SVDS for ``Alemdar'', ``3elt'',
``plddb'', ``bas1lp'' and ``stat96v5'', and it even used
much less CPU time than PRIMME\_SVDS.
Particularly, for ``G66'', JDSVD-V\_HYBRID converged fast and
computed all the desired singular triplets correctly, while
PRIMME\_SVDS failed to converge %when the maximum $N^2$
%MVs were used up
three times out of the ten runs,
and only output four to eight converged singular triplets.

When block Jacobi preconditioners were used in PRIMME\_SVDS for the
correction equations, they only accelerated the convergence
of the inner iterations for ``Alemdar'', ``plddb'', ``bas1lp'',
and ``cage15'' but
did not help for the other matrices in terms of the
MVs and the CPU time. Overall, JDSVD-V\_HYBRID
outmatched PRIMME\_SVDS\_J considerably in terms of MVs and
CPU time, except for the last two matrices where JDSVD-V\_HYBRID used more MVs but much shorter time.
These indicate that the application of the block Jacobi preconditioners with {\em bigger} size 100 was
time consuming though they were effective for the matrices ``atmosmodl'' and ``cage15''.
Therefore, JDSVD-V\_HYBRID outperforms PRIMME\_SVDS and
PRIMME\_SVDS\_J greatly when computing the smallest
singular triplets.

\section{Conclusions}\label{sec:7}

We have derived new correction equations in JDSVD,
and proved that their solutions expand the subspaces as
effectively as those of the standard correction equations. The resulting
JDSVD-V method is a new variant of the JDSVD method with a new
correction equation solved at each outer iteration.
We have established convergence results on MINRES for the new and standard correction
equations, and shown that MINRES can converge much faster for solving
the new correction equations than it does for the standard ones.

We have presented a practical strategy that dynamically selects
approximate singular triplets from current available ones and
forms a new correction equation at each outer iteration.
Finally, we have developed a new thick-restart JDSVD-V algorithm with
deflation and purgation to compute several singular triplets.
Numerical experiments have illustrated that the new thick-restart JDSVD-V algorithm
outperforms the standard thick-restart JDSVD algorithm considerably
in terms of the outer iterations, MVs and CPU time.
JDSVD-V\_HYBRID is at least competitive with PRIMME\_SVD
in \cite{wu16primme} for computing the largest singular triplets
and outperforms the latter substantially for computing the smallest ones in terms of MVs and reliability.

\section*{Declarations}

The two authors declare that they have no conflict of interest, and they read and approved the final manuscript.

%\bibliographystyle{siamplain}
%\bibliography{precondjdsvd_ref}

\end{document}